\documentclass[10pt,a4paper]{article}
\usepackage{amsmath, amssymb, amsbsy, amsthm}
\usepackage{amsfonts}
\usepackage{graphicx}
\usepackage{epstopdf}
\usepackage{verbatim}
\usepackage{hyperref}
\usepackage{ifthen}
\usepackage{enumerate}
\usepackage{esint}
\usepackage{authblk}
\usepackage{subfigure}


\makeatletter
\newcommand{\logmessage}[1]{\@latex@warning{#1}}
\makeatother
\numberwithin{equation}{section}
\setlength{\parskip}{0.1cm}


\theoremstyle{plain}
\newtheorem{thm}{Theorem}[section]
\newtheorem{prop}[thm]{Proposition}
\newtheorem{rem}[thm]{Remark}
\newtheorem{ass}[thm]{Assumption}
\newtheorem{lem}[thm]{Lemma}
\newtheorem{cor}[thm]{Corollary}

\theoremstyle{definition}
\newtheorem{definition}[thm]{Definition}
\newtheorem{example}[thm]{Example}
\newtheorem{remark}[thm]{Remark}


\newcommand{\rev}[1]{#1}
\newcommand{\Fcal}{\mathcal{F}}
\newcommand{\lr}[1]{\left(  #1 \right)}

\newcommand{\Gcal}{\mathcal{G}}

\newcommand{\Fe}{\Fcal_{\epsilon}}

\newcommand{\Ge}{\mathcal{G}_{\epsilon}}
\newcommand{\Fef}{\Fcal_{\epsilon,f}}
\newcommand{\Ged}{\Ge^\diamond}
\newcommand{\Gd}{\mathcal{G}^\diamond}
\newcommand{\open}[2]{Open_{#1}\lr{#2}}
\newcommand{\close}[2]{Close_{#1}\lr{#2}}

\renewcommand{\H}{\mathcal{H}}

\newcommand{\field}[1]{\ensuremath{\mathbb{#1}}}
\newcommand{\R}{\field{R}}
\newcommand{\N}{\field{N}}

\newcommand{\abs}[1]{\left \lvert#1 \right\rvert}

\newcommand{\set}[1]{\left\{#1\right\}}

\newcommand{\ve}{\varepsilon}

\newcommand{\hn}{H_0^1(\Omega)}

\newcommand{\hvdc}{H_{\diamond,c}^1(\Omega)}
\newcommand{\bvdo}{\text{BV}_{\diamond,1}}

\newcommand{\gs}{\geqslant}
\newcommand{\ls}{\leqslant}

\newcommand{\per}{\operatorname{Per}}
\newcommand{\supp}{\operatorname{Supp}}
\renewcommand{\div}{\operatorname{div}}
\newcommand{\dd}{\, \mathrm{d}}
\newcommand{\bigring}[1]{\overset{\circ}{#1}}

\makeatletter
\newcommand{\ignore}{\logmessage{Text ignored}\@gobble}
\makeatother

\makeatletter
\let\@fnsymbol\@alph
\makeatother

\title{Critical yield numbers of rigid particles settling in Bingham fluids and Cheeger sets}
\author[1]{Ian A. Frigaard \thanks{frigaard@math.ubc.ca}}
\author[2]{Jos\'{e} A. Iglesias \thanks{jose.iglesias@ricam.oeaw.ac.at}}
\author[2]{Gwenael Mercier \thanks{gwenael.mercier@ricam.oeaw.ac.at}}
\author[3]{Christiane P\"{o}schl \thanks{christiane.poeschl@aau.at}}
\author[2,4]{Otmar Scherzer \thanks{otmar.scherzer@univie.ac.at}}
\affil[1]{\small Department of Mathematics and Department of Mechanical Engineering, University of British Columbia, Vancouver, BC, Canada.}
\affil[2]{\small Johann Radon Institute for Computational and Applied Mathematics (RICAM), Austrian Academy of Sciences, Linz, Austria.}
\affil[3]{\small Universit\"at Klagenfurt, Klagenfurt, Austria.}
\affil[4]{\small Computational Science Center, University of Vienna, Vienna, Austria.}
\date{}
\begin{document}

\maketitle


\begin{abstract}
We consider the fluid mechanical problem of identifying the critical yield number $Y_c$ of a dense solid inclusion (particle) settling under gravity within a bounded domain of Bingham fluid, i.e.~the critical ratio of yield stress to buoyancy stress that is sufficient to prevent motion.
We restrict ourselves to a two-dimensional planar configuration with a single anti-plane component of velocity. Thus, both particle and fluid domains are infinite cylinders of fixed cross-section. We then show that such yield numbers arise from an eigenvalue problem for a constrained total variation.
We construct particular solutions to this problem by consecutively solving two Cheeger-type set optimization problems. Finally, we present a number of example geometries in which these geometric solutions can be found explicitly and discuss general features of the solutions. 
\end{abstract}

\section{Introduction}
\label{sec:intro}

100 years ago Eugene Bingham \cite{Bingham1916} presented results of flow experiments through a capillary tube, measuring the flow rate and pressure drop for various materials of interest. Unlike with simple viscous fluids, he recorded a ``friction constant'' (a stress) that must be exceeded by the pressure drop in order for flow to occur, and thereafter postulated a linear relationship between applied pressure drop and flow rate. This empirical flow law evolved into the Bingham fluid: the archetypical yield stress fluid. However, it was not until the 1920's that ideas of visco-plasticity became more established \cite{Bingham1922} and other flow laws were proposed e.g.~\cite{Herschel-Bulkley1926}. These early works were empirical and focused largely at viscometric flows. Proper tensorial descriptions, general constitutive laws and variational principles waited until Oldroyd \cite{Oldroyd1947} and Prager \cite{Prager1954}. These constitutive models are now widely used in a range of applications, in both industry and nature; see \cite{Balmforth2014} for an up to date review.

An essential feature of Bingham fluids flows is the occurrence of plugs: that is regions within the flow containing fluid that moves as a rigid body. This occurs when the deviatoric stress falls locally below the yield stress, which is a physical property of the fluid. Plug regions may occur either within the interior of a flow or may be attached to the wall. In general, as the applied forcing decreases, the plug regions increase in size and the velocity decreases in magnitude. It is natural that at some critical ratio of the driving stresses to the resistive yield stress of the fluid, the flow stops altogether. This critical yield ratio or \emph{yield number} is the topic of this paper.

Critical yield numbers are found for even the simplest 1D flows, such as Poiseuille flows in pipes and plane channels or uniform film flows, e.g.~paint on a vertical wall. These limits have been estimated and calculated exactly for flows around isolated particles, such the sphere \cite{Beris1985} (axisymmetric flow) and the circular disc \cite{Randolph1984,Tokpavi2008} (2D flow). Such flows have practical application in industrial non-Newtonian suspensions, e.g.~mined tailings transport, cuttings removal in drilling of wells, etc.

The first systematic study of critical yield numbers was carried out by Mosolov \& Miasnikov \cite{MosMia65,MosMia66} who considered anti-plane shear flows, i.e.~flows with velocity $\mathbf{u} = (0,0,w(x_1,x_2))$ in the $x_3$-direction along ducts (infinite cylinders) of arbitrary cross-section $\Omega$. These flows driven by a constant pressure gradient only admit the static solution ($w(x_1,x_2) = 0$) if the yield stress is sufficiently large. Amongst the many interesting results in \cite{MosMia65,MosMia66} the key contributions relate to exposing the strongly geometric nature of calculating the critical yield number $Y_c$. Firstly, they show that $Y_c$ can be related to the maximal ratio of area to perimeter of subsets of $\Omega$. Secondly, they develop an algorithmic methodology for calculating $Y_c$ for specific symmetric $\Omega$, e.g.~rectangular ducts. This methodology is extended further by \cite{Huilgol2006}.

Critical yield numbers have been studied for many other flows, using analytical estimates, computational approximations and experimentation. Critical yield numbers to prevent bubble motion are considered in \cite{Dubash2004,Tsamopoulos2008}. Settling of shaped particles is considered in \cite{Jossic2001,Putz2010}. Natural convection is studied in \cite{Karimfazli2013,Karimfazli2015}. The onset of landslides are studied in  \cite{Hild2002,Ionescu2005,Hassani2005} (where the terminologies ``load limit analysis'' and ``blocking solutions'' have also been used). In \cite{Frigaard1998,Frigaard2000} we have studied two-fluid anti-plane shear flows, that arise in oilfield cementing.

In this paper we study critical yield numbers for two-phase anti-plane shear flows, in which a particulate solid region ${\Omega}_s$ settles under gravity in a surrounding Bingham fluid of smaller density. As the particle settles downwards the surrounding fluid moves upwards, with zero net flow: a so called \emph{exchange flow}. Our objective is to derive new results that set out an analytical framework and algorithmic methodology for calculating $Y_c$ for this class of flows. 

Our analysis naturally leads to the so-called Cheeger sets, that is, minimizers of the ratio of perimeter to volume inside a given domain. Recently, starting with \cite{KawFri03}, many of their properties have been studied, particularly regularity and uniqueness in the case of convex domains \cite{KawLac06, CasChaNov10}. These sets constitute examples of explicit solutions to the total variation flow, which has motivated their investigation \cite{AndCasMaz04, BelCasNov02, BelCasNov05}.

A related line of research is the use of total variation regularization in image processing. In particular, set problems like those treated here appear in image segmentation \cite{ChaEseNik06} and as the problem solved by the level sets of minimizers \cite{ChaCasCreNov10,All09,CasNovPoe16} of the Rudin Osher Fatemi functional \cite{RudOshFat92}. The analogy between anti-plane shear flows of yield stress fluids and imaging processing techniques has been exploited previously by the authors in the context of nonlinear diffusion filtering using total variation flows or bounded variation type regularization. In our previous work \cite{Frigaard2003,Frigaard2006} we exploited physical insights from the fluid flow problem in order to derive optimal stopping times for diffusion filtering. 

\subsection{Summary and outline}
First, in Section \ref{sec:model} we write the simplified Navier-Stokes equations and corresponding variational formulation for the inclusion of a Newtonian fluid in a Bingham fluid, in geometries consisting of infinite cylinders and anti-plane velocities.\\
Section \ref{se:energy} is dedicated to the background theory for the exchange flow problem. After proving existence of solutions, we make the viscosity of the inclusion tend to infinity, that is, we study the flow of a solid inclusion into a Bingham fluid.\\
We then recall the usual notion of critical yield number, seen as the supremum of an eigenvalue quotient \eqref{eq:eigenvalue} in the standard Sobolev space $H^1$, which writes after simplification as a minimization of total variation with constraints. Since it is well known that such a problem does not necessarily have a solution in $H^1$, we relax it enlarging the admissible space to functions with bounded variation, which ensures the existence of a minimizer. \\
In Section \ref{se:pwcm} we study the relaxed problem and show that we can construct minimizers that attain only three values, and whose level-sets are solutions of simple geometrical problems closely related to the Cheeger problem (see Def. \ref{de:ch}). We show how the geometrical properties of Cheeger sets are reflected in the structure of our three level-set minimizer, and give several explicit examples exhibiting the influence of the geometry of the domain and the particles in that of the solution. In particular, we emphasize the role of non-uniqueness of Cheeger sets in the non uniqueness of our minimizers. \\
Finally, Section \ref{se:examples} is dedicated to the explicit construction of three-valued solutions and computing the corresponding yield numbers in simple situations.\\
It has to be noticed that the restriction to anti-plane flows and equal particle velocities is fundamental in all this work. The in-plane case remains an exciting challenge.


\section{Modelling}
\label{sec:model}
As discussed in Section \ref{sec:intro} we study anti-plane shear flows of particles within a Bingham fluid. Anti-plane shear flows have velocity in a single direction and the velocity depends on the 2 other coordinate directions. We assume the solid is denser than the fluid ($\hat{\rho}_f < \hat{\rho}_s$) and align the flow direction $\hat{x}_3$ with gravity.
In the anti-plane shear flow context,  particles (solid regions) are infinite cylinders represented as ${\hat{\Omega}}_s \times \R \subseteq \R^3$ and moving uniformly in the $\hat{x}_3$-direction. The flows are thus described in a two-dimensional region $(\hat{x}_1,\hat{x}_2) \in \hat{\Omega}$. The fluid is contained in
$({\hat{\Omega}}_f := {\hat{\Omega}} \backslash {\hat{\Omega}}_s) \times \R$, and is considered to be a Bingham fluid.
The flow variables are the deviatoric stress $\hat{\mathbf{\tau}}$, pressure $\hat{p}$ and velocity $\hat{w}$, all of which are independent of $\hat{x}_3$. Only steady flows are considered.

The fluid is characterized physically by its density, yield stress and plastic viscosity: $\hat{\rho}_f$, $\hat{\mu}_f$ and $\hat{\tau}_Y$, respectively. We adopt a fictitious domain approach to modelling the solid phase, treating it initially as a fluid and then formally taking the solid viscosity to infinity. The solid phase density and viscosity are $\hat{\rho}_s$ and $\hat{\mu}_s$. These parameters are assumed constant.

The incompressible Navier-Stokes equations simplify to only the $\hat{x}_3$-momentum balance. This and the constitutive laws are:
\begin{equation}
\hat{\div}\, \hat{\mathbf{\tau}} =
\begin{cases}
\hat{p}_{x_3} - \hat{\rho}_f \hat{g} \quad \text{ in } \hat
{\Omega}_f\,,\\
\hat{p}_{x_3} - \hat{\rho}_s \hat{g} \quad \text{ in } \hat
{\Omega}_s\,,\\
\end{cases}
\quad
\hat{\mathbf{\tau}}  =
\begin{cases}
\left( \hat{\mu}_f + \frac{\hat{\tau}_Y}{\abs{\hat{\nabla}
\hat{w}}}
\right) \hat{\nabla} \hat{w}
&\text{in }  \hat{\Omega}_f\,,\\
\hat{\mu}_s \hat{\nabla} \hat{w}
&\text{in } \hat{\Omega}_s\;,
\end{cases}
\label{x3_momentum}
\end{equation}
where $\hat{g}$ is the gravitational acceleration. Strictly speaking the fluid constitutive law applies only to where $|\hat{\mathbf{\tau}}| > \hat{\tau}_Y$.

The above model and variables are dimensional, for which we have adopted the convention of using the ``hat'' accent, e.g.~$\hat{g}$. We now make the model dimensionless by scaling. 
In (\ref{x3_momentum}) the driving force for the motion is the density difference, which results in a buoyancy force that scales proportional to the size of the particle. Thus, we scale lengths with $\hat{L}$:
\begin{equation*}
 \hat{L} = \sqrt{\text{area}({\hat{\Omega}}_s)}\,, \quad \mathbf{x} =(x_1,x_2) :=
\frac{1}{\hat{L}}(\hat{x}_1,\hat{x}_2)\,, \quad \nabla = \hat{L} \hat{\nabla}\,, \quad \div = \hat{L}\,\hat{\div}.
\end{equation*}
An appropriate measure of the buoyancy stress is $(\hat{\rho}_s- \hat{\rho}_f)\hat{g}\hat{L}$, which we use to scale $\hat{\mathbf{\tau}} = (\hat{\rho}_s- \hat{\rho}_f)\hat{g}\hat{L} \mathbf{\tau}$. For the pressure gradient in (\ref{x3_momentum}) we subtract the hydrostatic pressure gradient from the fluid phase and scale the modified pressure gradient with $(\hat{\rho}_s -
\hat{\rho}_f)\hat{g}$, defining:
\[
f  =  \frac{\hat{p}_{x_3}-\hat{\rho}_f \hat{g}}{ (\hat{\rho}_s - \hat{\rho}_f)\hat{g}}  .
\]
The scaled momentum equations are:
\begin{equation}
\div \mathbf{\tau} =
\begin{cases}
f \quad \text{ in } \Omega_f\,,\\
f-1 \quad \text{ in } \Omega_s\,,\\
\end{cases}
\end{equation}

For the constitutive laws, we define a velocity scale $\hat{w}_0$ by balancing the buoyancy stress with a representative viscous stress in the fluid:
\[ (\hat{\rho}_s- \hat{\rho}_f)\hat{g}\hat{L} = \frac{\hat{\mu}_f\hat{w}_0}{\hat{L}} . \]
Scaled constitutive laws are:
\begin{equation}
\mathbf{\tau} = \frac{1}{\varepsilon} \nabla  w \text{ in } \Omega_s;
\quad
\begin{cases}
\mathbf{\tau} = \left( 1 + \displaystyle{\frac{Y}{\abs{\nabla w}}}
\right) \nabla w
&\,   \abs{\mathbf{\tau}} > Y,  \\
\abs{\nabla w} = 0
&\, \abs{\mathbf{\tau}} \le Y
\end{cases}\quad\text{in } \Omega_f.
\label{x3_momentum_dimensionless}
\end{equation}
We note that there are two dimensionless parameters: $\varepsilon$ and $Y$, defined as:
\begin{equation*}
\varepsilon:= \frac{\hat{\mu}_f}{\hat{\mu}_s} \,, \quad  \quad
Y:= \frac{\hat{\tau}_Y }{ (\hat{\rho}_s-\hat{\rho}_f)\hat{g}\hat{L}} .
\end{equation*}
Evidently, $\varepsilon$ is a viscosity ratio. Soon we shall consider the solid limit $\varepsilon \to 0$, and thereafter $\varepsilon$ plays no role in our study.

The parameter $Y$ is called the \emph{yield number} and is central to our study. We see that physically $Y$ balances the yield stress and the buoyancy stress. As buoyancy is the only driving force for motion, it is intuitive that there will be no flow if $Y$ is large enough. The smallest $Y$ for which the motion is stopped is called the \emph{critical yield number}, $Y_c$, although this will be defined rigorously later.\footnote{The yield number is sometimes referred to as the yield gravity number or yield buoyancy number. As the viscous stresses are also driven by buoyancy, an alternate interpretation would be as a ratio of yield stress to viscous stress, which is referred to as the Bingham number.}

In terms of $w$ the momentum equation is:
\begin{equation}
\label{eq:Pde}
\begin{array}{rlc}
\div \left( \left( 1 + \frac{Y}{\abs{\nabla w}}\right) \nabla
w\right) &= f &\text{in } \Omega_f, \\
\div \left( \frac{1}{\varepsilon} \nabla  w\right) &= f-1
&\text{
in } \Omega_s\;.
\end{array}
\end{equation}
It is assumed that $\Omega$ has finite extent and at the stationary boundary we assume the no-slip condition:
\begin{equation} \label{eq:bcd}
   w \equiv 0 \text{ on } \partial \Omega\, .
\end{equation}
At the interface between the two phases the shear stresses are assumed continuous, leading to the transmission condition:
\begin{equation}\label{eq:transC}
  \frac{1}{\epsilon} \nabla w \cdot \mathbf{n}_s +
  \lr{1+\frac{Y}{\abs{\nabla w}}}
  \nabla w \cdot \mathbf{n}_f = 0
  \quad \text{on }\partial \Omega_s.
\end{equation}
Here $\mathbf{n}_s,~\mathbf{n}_f$ denote the outer unit-normals on $\partial \Omega_s,~\partial \Omega_f$, and the equality has to hold in a weak sense.

We note that for given $f$ and $\ve > 0$ fixed, the solution \rev{$w_f$}
of \eqref{eq:Pde}, \eqref{eq:transC}, \eqref{eq:bcd} is equivalently
characterized
as the minimizer of the functional
\begin{equation}
\label{eq:Fef}
\begin{aligned}
  \Fef(w) &:= \Ge(w) +  \int_\Omega \rev{f} w \text{ with }\\
  \Ge(w)&:=\frac{1}{2}\int_{\Omega_f} \abs{\nabla w}^2 +
    \frac{1}{2 \varepsilon} \int_{\Omega_s} \abs{\nabla w}^2 +
    Y \int_{\Omega_f} \abs{\nabla w}  - \int_{\Omega_s} w
\end{aligned}
\end{equation}
over the space $\hn$.

\section{Exchange Flow Problem}
\label{se:energy}

Physically, as a solid particle settles in a large expanse of incompressible fluid, its downwards motion causes an equal upwards motion such that the net volumetric flux is zero. Here we wish to mimic this same scenario in the anti-plane shear flow context. 

Therefore, we are interested in the \emph{exchange flow problem}, which \rev{consists in finding the pair $(w,f)$ that satisfies:}
\begin{itemize}
\item Equation \eqref{eq:Pde} \rev{and condition \eqref{eq:transC} in a suitable variational sense},
\item the homogeneous boundary conditions \eqref{eq:bcd},
\item and the \emph{exchange flow condition}
      \begin{equation} \label{eq:ic}
            \int_\Omega w(x)\,dx =0\;.
      \end{equation}
\end{itemize}
\noindent
\rev{Note that \eqref{eq:ic} states that the anti-plane flow is divergence free. Therefore, we identify $f$ with a scalar.} Two equivalent formulations of this problem are possible:
\begin{enumerate}
\item Finding a saddle point of the functional
      \begin{equation}\label{eq:Fvf}
      \Fe(w,f):=\Fef(w)
      \end{equation}
      on $\hn \times \R$, with $\Fef$ from \eqref{eq:Fef}.
      In other words, $f$ is a Lagrange multiplier in the saddle point problem
      for satisfying the constraint \eqref{eq:ic}.
\item Incorporating the constraint \eqref{eq:ic} as part of the domain of definition.
      Thus we consider minimization of the functional
      \begin{equation}\label{eq:Fepsilon2}
	\Ged(w):=
	\begin{cases}\Ge(w) &
	 \text{if }w\in H_\diamond^1(\Omega):=\set{w\in \hn: \int_\Omega w= 0}\,,\\
	+\infty &\text{ for } w\in \hn \backslash H_\diamond^1(\Omega)\;.
	\end{cases}
      \end{equation}
      We show in Lemma \ref{le:ExistenceFfe} that a minimizer of $\Ged$ exists.
\end{enumerate}

In the rest of the paper we focus on the second formulation.

\begin{lem}\label{le:ExistenceFfe}
The functionals $\Fef(\cdot)$ and $\Ged(\cdot)$ attain their minimum.
If the minimizer $w^*$ of $\Fef(\cdot)$ satisfies $\int_\Omega w^* =
0$,
then it is also a minimizer of $\Ged(\cdot)$.
\end{lem}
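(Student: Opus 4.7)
The plan is to apply the direct method of the calculus of variations to each functional and then deduce the equivalence from the exchange-flow constraint. For $\Fef$ on $\hn$, I would first check coercivity: the quadratic gradient contributions $\tfrac12\int_{\Omega_f}|\nabla w|^2+\tfrac{1}{2\varepsilon}\int_{\Omega_s}|\nabla w|^2$ bound $\|\nabla w\|_{L^2(\Omega)}^2$ from below by a positive multiple, and since $\Omega$ is bounded, Poincar\'e's inequality on $\hn$ upgrades this to control of the full $H^1$ norm. The linear terms $\int_\Omega f w$ and $-\int_{\Omega_s}w$ are dominated by Cauchy--Schwarz, the $Y\int_{\Omega_f}|\nabla w|$ term is non-negative, and altogether one obtains an estimate of the form $\Fef(w)\geq c\|w\|_{H^1}^2-C\|w\|_{H^1}$, yielding both boundedness below and coercivity. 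Weak lower semicontinuity on $\hn$ follows from the convexity and strong continuity of each term (including the convex continuous functional $w\mapsto Y\int_{\Omega_f}|\nabla w|$), together with the weak continuity of the linear part. Taking a minimizing sequence, extracting a weakly convergent subsequence by reflexivity, and applying weak lsc then produces a minimizer $w^*\in\hn$.

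For $\Ged$, the same argument runs on the subspace $\hvd=\{w\in\hn:\int_\Omega w=0\}$: since this is the kernel of a continuous linear functional it is weakly sequentially closed, so the weak limit of a minimizing sequence still satisfies the constraint, and the coercivity and lsc estimates carry over verbatim (with the $\int_\Omega f w$ term absent). This produces a minimizer of $\Ged$ in $\hvd$.

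For the final assertion, suppose $w^*$ minimizes $\Fef$ on $\hn$ with $\int_\Omega w^*=0$. Since $f$ is a scalar, for every competitor $v\in\hvd$ one has $\int_\Omega f v=f\int_\Omega v=0=\int_\Omega f w^*$, and hence $\Fef(v)=\Ge(v)=\Ged(v)$ and $\Fef(w^*)=\Ge(w^*)=\Ged(w^*)$. The minimality of $w^*$ for $\Fef$ over the larger space $\hn\supset\hvd$ then gives $\Ged(v)\geq\Ged(w^*)$ for all $v\in\hvd$, so $w^*$ is also a minimizer of $\Ged$. No serious obstacle is expected; the only step requiring a bit of care is the weak lower semicontinuity of the non-smooth $L^1$-type term $Y\int_{\Omega_f}|\nabla w|$, which is nevertheless standard for convex continuous functionals on a reflexive Banach space.
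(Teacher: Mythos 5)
Your proposal is correct and follows essentially the same route as the paper: the direct method with coercivity obtained from the quadratic gradient terms plus Poincar\'e (the paper absorbs the linear terms via Young's inequality with a small parameter rather than your quadratic-minus-linear bound, but this is cosmetic), weak lower semicontinuity from convexity, and weak closedness of the zero-mean subspace. Your explicit argument for the final assertion (that $\int_\Omega f\,v = 0$ on $\hvd$ makes $\Fef$ and $\Ged$ coincide there) is correct and in fact fills in a step the paper's proof leaves implicit.
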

\begin{proof}
In order to prove the existence of a minimizer of
$w\rev{\mapsto} \Fcal_{\epsilon,f}(w)$ for $f$ fixed, we show that the
functional is coercive and lower semi-continuous:

\begin{enumerate}[i)]
\item \textit{The functional $\Fcal_{\epsilon,f}(w)$ is coercive with
respect to
$w$.} For all $\delta > 0$, and denoting by $|\Omega|$ the Lebesgue measure of $\Omega$, it follows from \rev{Poincare and Jensen's inequalities} that
  \begin{equation}
   \label{eq:f_help}
  \begin{aligned}
  \rev{f} \int_\Omega w &\gs - \frac{1}{2\delta^2} \rev{f}^2 -
\frac{\delta^2}{2} \lr{\int_\Omega \abs{w}}^2
  \gs - \frac{1}{2\delta^2} \rev{f}^2 -
\frac{\delta^2}{2} |\Omega| \int_\Omega \abs{w}^2\\
 &\gs - \frac{1}{2\delta^2} \rev{f}^2 - C
\frac{\delta^2}{2} |\Omega| \int_\Omega \abs{\nabla w}^2,
   \end{aligned}
  \end{equation}
  \rev{similarly, we have}
  \begin{equation}
  \label{eq:eins_help}
  \begin{aligned}
  - \int_{\Omega_s} w &\gs \rev{- \frac{1}{2\delta^2}  - \frac{\delta^2}{2} |\Omega_s| \int_{\Omega_s} \abs{w}^2 \gs - \frac{1}{2\delta^2}  - \frac{\delta^2}{2} |\Omega| \int_{\Omega} \abs{w}^2 } \\
  &\gs - \frac{1}{2\delta^2}  - C \frac{\delta^2}{2} |\Omega| \int_\Omega \abs{\nabla w}^2\;.
  \end{aligned}
  \end{equation}
  \rev{Summing \eqref{eq:f_help} and \eqref{eq:eins_help} yields}
  \begin{equation*}
    f \int_\Omega w - \int_{\Omega_s} w \gs - \frac{1}{2\delta^2}
(f^2+1) - C \delta^2 |\Omega| \int_\Omega \abs{\nabla w}^2\;.
  \end{equation*}
  Now, choosing $\delta>0$ such that
  \begin{equation*}
    0 < C \delta^2 |\Omega| < \frac{1}{2} \min
\set{1,\frac{1}{\epsilon}}\,,
  \end{equation*}
  the coercivity with respect to $w$ follows.

\item  For $\epsilon<1$, we now have $2 C |\Omega| < 1/\delta^2$ and thus we see that $\Fcal_{\epsilon,f}$ is
bounded from below by $-C \, (f^2+1) |\Omega|$.

\item  \textit{The functional $\Fcal_{\epsilon,f}$ is weakly lower
semi-continuous:}
    $\Fcal_{\epsilon,f}$ can be rewritten as
    \begin{equation*}
     \Fcal_{\epsilon,f}(w) = \int_\Omega g(x,w(x),\nabla w(x))
dx\,,
    \end{equation*}
    where $p\rightarrow g(s,z,p)$ is convex. Since
    $\Fcal_{\epsilon,f}$ is also bounded below, we have (see for instance \cite[Thm. 13.1.2]{AttButMic06}) that $\Fcal_{\epsilon,f}(w)$ is weakly lower semi-continuous.
\end{enumerate}
With this (coercivity, boundedness and weak lower semi-continuity) existence of
a minimizer of $w\rightarrow \Fcal_{\epsilon,f}(w)$ follows
immediately (see \cite[Thm. 3.2.1]{AttButMic06}).

The proof of existence of minimizer of $\Fcal_\epsilon^\diamond$ requires in
addition to show that $H^1_{\diamond}(\Omega)$ is weakly closed.
Therefore note first that the set $H^1_{\diamond}(\Omega)$ is
convex (linearity of the \rev{constraint}) and closed with respect
to the norm topology on $H^1_{\diamond}(\Omega)$.
From this we can conclude that $H^1_{\diamond}(\Omega)$ is weakly closed, so
that (see \cite[Thm. 3.3.2]{AttButMic06}) the functional attains a
minimium on this subset.
\end{proof}
\subsection{Solid limit}
Now we want to study the behavior of the problem when $\hat{\mu}_s\rightarrow \infty$ (so that $\hat \Omega_s$ becomes rigid), that is, $\epsilon \rightarrow 0$.
We will see that it leads to minimization of the functional
      \begin{equation}\label{eq:F}
      \begin{aligned}
	\Gd: \hn &\rightarrow \R\cup \set{+\infty}\;.\\
	w &\rightarrow
	\begin{cases}
	\frac{1}{2}\int_{\Omega_f} \abs{\nabla w}^2 +
	 Y \int_{\Omega_f} \abs{\nabla w}  - \int_{\Omega_s} w
&\text{if} \; w \in  \hvdc \\
	+\infty &\text{else}
	\end{cases}
	\end{aligned}
      \end{equation}
      where we define
      \begin{equation*}
      \hvdc:=\set{w \in \hn: \int_\Omega w = 0, \; \nabla w = 0 \;
\text{in } \Omega_s}\;.
      \end{equation*}
\begin{lem}\label{le:gammaConvergence}
The functionals $\Ged$ defined in \eqref{eq:Fepsilon2} $\Gamma-$converge to
$\Gd$ in $\hn$, that is\rev{,}
for all $w\in \hn$ and all sequences $\set{\epsilon_j}_{j\in \N}$
converging to $0$ we have:
\begin{enumerate}[i)]
 \item (lim inf inequality) For every sequence $\set{w_j}_{j\in \N}$
converging
to $w$ in $H^1$ we have
        \begin{equation*}
         \Gd(w) \ls \lim \inf_{j \to \infty} \Gd_{\epsilon_j}(w_j).
       \end{equation*}
\item (lim sup inequality) There exists a sequence $\set{w_j}_{j\in \N}$
converging to $w$ in $H^1$ with
        \begin{equation}
        \label{eq:Gamma_sup}
         \Gd(w) \gs \lim \sup_{j \to \infty} \Gd_{\epsilon_j}(w_j)\;.
        \end{equation}
\end{enumerate}
\end{lem}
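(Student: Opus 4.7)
The plan is to exploit the observation that the only difference between $\Ge$ and $\Gd$ is the penalty $\frac{1}{2\epsilon}\int_{\Omega_s}|\nabla w|^2$, which as $\epsilon\to 0$ enforces the hard constraint $\nabla w = 0$ on $\Omega_s$ while the remaining terms of $\Ge$ already coincide with those of $\Gd$.

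For the $\liminf$ inequality, fix a sequence $w_j \to w$ in $H^1(\Omega)$; the case $\liminf_j \Ged_{\epsilon_j}(w_j) = +\infty$ is trivial, so extract a subsequence (still indexed by $j$) along which $\Ged_{\epsilon_j}(w_j) \ls M < +\infty$. Each such $w_j$ then lies in $H_\diamond^1(\Omega)$, so $\int_\Omega w_j = 0$, and strong $L^2$-convergence gives $\int_\Omega w = 0$. Since the three gradient summands in $\Ged_{\epsilon_j}$ are nonnegative and $|\int_{\Omega_s} w_j|\ls |\Omega|^{1/2}\|w_j\|_{L^2}$ stays bounded by the $H^1$-convergence, one obtains
\[
\frac{1}{2\epsilon_j}\int_{\Omega_s}|\nabla w_j|^2 \ls M + \Big|\int_{\Omega_s} w_j\Big| \ls C,
\]
hence $\int_{\Omega_s}|\nabla w_j|^2 \ls 2C\epsilon_j \to 0$. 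Combined with $\nabla w_j \to \nabla w$ in $L^2(\Omega)$, this forces $\nabla w \equiv 0$ a.e.\ in $\Omega_s$, so $w \in \hvdc$. Strong $H^1$-convergence makes $\int_{\Omega_f}|\nabla w|^2$, $\int_{\Omega_f}|\nabla w|$ and $\int_{\Omega_s} w$ continuous along the subsequence, and dropping the nonnegative penalty term yields $\Gd(w) \ls \liminf_j \Ged_{\epsilon_j}(w_j)$.

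For the $\limsup$ inequality, if $\Gd(w) = +\infty$ the constant sequence $w_j = w$ works trivially. Otherwise $w \in \hvdc$, so $\nabla w = 0$ on $\Omega_s$ and then $\Ged_{\epsilon_j}(w) = \Gd(w)$ for every $j$; thus $w_j \equiv w$ is an immediate recovery sequence.

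No serious obstacle is anticipated: the argument reduces to extracting $\nabla w = 0$ on $\Omega_s$ from the bound on the penalty, using that the only sign-indefinite term $-\int_{\Omega_s} w_j$ is automatically controlled by the $H^1$-convergence, together with continuity of the $\epsilon$-independent terms of $\Ge$ under strong $H^1$-convergence; the recovery sequence is immediate because $\Gd$ coincides with $\Ge$ on its effective domain.
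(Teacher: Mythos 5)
Your proof is correct and follows essentially the same route as the paper's: the constant recovery sequence for the limsup inequality, and for the liminf inequality the observation that the penalty $\frac{1}{2\epsilon_j}\int_{\Omega_s}\abs{\nabla w_j}^2$ must stay bounded whenever the liminf is finite, which forces $\nabla w = 0$ on $\Omega_s$ in the limit. If anything, your contrapositive organization of the liminf step and your explicit treatment of the mean-zero constraint are slightly more careful than the paper's version, which directly asserts that the penalty blows up when $w \notin \hvdc$.
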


\begin{proof}
Let $w \in \hn$ and let $\epsilon_j \to 0+$ be a decreasing
sequence with limit $0$.
\begin{enumerate}[i)]
\item
 For every sequence $w_j$ converging to $w$ in $\hn$, we
have
\begin{equation*}
 \begin{gathered}
  \lim_{j\rightarrow \infty} \int_{\Omega} \abs{w_j} = \int_{\Omega} \abs{w},\quad
  \lim_{j\rightarrow \infty} \int_\Omega \abs{\nabla w_j}^2 = \int_\Omega \abs{\nabla w}^2,\\
  \lim_{j\rightarrow \infty} \int_\Omega w_j = \int_\Omega w, \quad
  \lim_{j\rightarrow \infty} \int_{\Omega_s} w_j = \int_{\Omega_s} w,
 \end{gathered}
\end{equation*}
such that for all $w \in \hvdc$
\begin{equation*}
\begin{aligned}
  \Gd(w)&= \frac{1}{2}\int_{\Omega_f} \abs{\nabla w}^2 +
	 Y \int_{\Omega_f} \abs{\nabla w}  - \int_{\Omega_s} w
	\\
 &\ls  \liminf_{j\rightarrow \infty}
 \lr{\frac{1}{\epsilon_j} \int_{\Omega_s}\abs{\nabla w_j}^2
+
 \frac{1}{2}\int_{\Omega_f} \abs{\nabla w_j}^2 +
	 Y \int_{\Omega_f} \abs{\nabla w_j}  - \int_{\Omega_s} w_j }\\
	 &\ls   \liminf_{j\rightarrow \infty}
\Gcal_{\epsilon_j}^\diamond(w_j)\;.
 \end{aligned}
\end{equation*}

If $w$ is not constant in $\Omega_s$,
\rev{$\Gcal^{\diamond}(w) =+\infty$} and also
$\liminf_{j\rightarrow \infty} \rev{\Gd_{\epsilon_j}(w_j)} \to \infty$
since $\lim_j \int_{\Omega_s} \abs{\nabla w_j}^2\not=0$ \rev{so} that
$\frac{1}{\epsilon_j}\int_{\Omega_s} \abs{\nabla w_j}^2 \rightarrow
\infty$.

\item
In the case where $w \not \in \hvdc$, we have
$$\limsup \Gcal_{\epsilon_j}^\diamond(w) = \infty
= \Gd(w).$$ For $w \in \hvdc$ we have that
$\int_{\Omega_s} \abs{\nabla w}^2=0.$
This shows that the constant sequence $w_j \equiv w$ satisfies \eqref{eq:Gamma_sup}.
\end{enumerate}
\end{proof}

Since \rev{$\Ged(\cdot)\gs \Gd_2(\cdot)$, they are equicoercive and} we get (see \cite[Thm. 1.21]{Bra02}) that

\begin{cor}The sequence of minimizers of $\Ged(\cdot)$ converges strongly in \rev{$H^1_0$} to the minimizer of $\Gd(\cdot)$ as $\epsilon \to 0$.
\end{cor}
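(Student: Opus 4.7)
The plan is to apply the fundamental theorem of $\Gamma$-convergence (Braides, Thm.~1.21) using the $\Gamma$-convergence already established in Lemma \ref{le:gammaConvergence}, and then upgrade the resulting weak $H^1$ convergence of minimizers to strong convergence by matching the limit of energies term by term. Two items are required before invoking the theorem: equicoercivity of $\{\Ged\}$ and uniqueness of the limit minimizer.

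For equicoercivity, I would fix $\epsilon<1$ so that $\tfrac{1}{2\epsilon}\ge \tfrac{1}{2}$ and thus
\[
\Ged(w) \gs \Gd_2(w) := \tfrac12 \int_\Omega |\nabla w|^2 + Y\int_{\Omega_f}|\nabla w| - \int_{\Omega_s} w
\]
on $H^1_\diamond(\Omega)$. The bound $-\int_{\Omega_s} w \gs -\tfrac{1}{2\delta^2}-C\delta^2|\Omega|\int_\Omega|\nabla w|^2$ from estimate \eqref{eq:eins_help} in the proof of Lemma \ref{le:ExistenceFfe}, absorbed into the $L^2$ gradient term via Poincar\'e, gives $\Gd_2(w)\gs \alpha \|w\|_{H^1}^2-\beta$ with $\alpha,\beta$ independent of $\epsilon$. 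Hence sublevel sets of the family $\{\Ged\}$ are uniformly bounded in $H^1_0(\Omega)$ and therefore relatively compact in the weak topology.

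By Braides Thm.~1.21, combining this equicoercivity with Lemma \ref{le:gammaConvergence}, we obtain $\min \Ged \to \min \Gd$ and every weak $H^1$ cluster point of a family of minimizers $w_{\epsilon}$ is a minimizer of $\Gd$. The limit minimizer is unique: on the affine subspace $\hvdc$ the term $\tfrac12 \int_{\Omega_f}|\nabla w|^2$ is strictly convex in $\nabla w|_{\Omega_f}$, and since $w=0$ on $\partial\Omega$ and $w$ is constant on each connected component of $\Omega_s$ (matching the boundary trace from $\Omega_f$), equality of gradients on $\Omega_f$ propagates to equality on all of $\Omega$. So the whole family $w_{\epsilon}$ converges weakly in $H^1_0(\Omega)$ to the unique minimizer $w^*$ of $\Gd$.

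The main remaining step, and the one requiring a little care, is the upgrade to strong convergence. By Rellich--Kondrachov, $w_{\epsilon}\to w^*$ strongly in $L^2(\Omega)$, so $\int_{\Omega_s} w_{\epsilon}\to \int_{\Omega_s} w^*$. The functionals $w\mapsto \int_{\Omega_f}|\nabla w|^2$ and $w\mapsto \int_{\Omega_f}|\nabla w|$ are convex and continuous on $H^1_0$, hence weakly lower semicontinuous, and the term $\tfrac{1}{2\epsilon}\int_{\Omega_s}|\nabla w_{\epsilon}|^2$ is nonnegative. Writing
\[
\Gd(w^*)=\lim_{\epsilon\to 0} \Ged(w_{\epsilon}) \gs \tfrac12 \liminf \int_{\Omega_f}|\nabla w_{\epsilon}|^2 + Y\,\liminf \int_{\Omega_f}|\nabla w_{\epsilon}| - \int_{\Omega_s}w^* \gs \Gd(w^*),
\]
I would then argue that each individual $\liminf$ must be an equality (since each $\limsup$ is controlled by the sum and one cannot exceed the $\Gd$ value). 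In particular $\int_{\Omega_f}|\nabla w_{\epsilon}|^2 \to \int_{\Omega_f}|\nabla w^*|^2$ and $\int_{\Omega_s}|\nabla w_{\epsilon}|^2 \to 0 = \int_{\Omega_s}|\nabla w^*|^2$, so $\|\nabla w_{\epsilon}\|_{L^2(\Omega)}\to \|\nabla w^*\|_{L^2(\Omega)}$. Combined with weak convergence in a Hilbert space, this delivers strong $H^1_0$ convergence, finishing the proof.
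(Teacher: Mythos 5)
Your proposal follows essentially the same route as the paper: equicoercivity via the pointwise lower bound $\Ged \gs \Gd_2$, combined with the $\Gamma$-convergence of Lemma \ref{le:gammaConvergence} and Braides' Theorem 1.21. The paper stops at that citation, whereas you additionally supply the uniqueness of the minimizer of $\Gd$ (needed to pass from subsequential to full-family convergence) and the energy-matching argument that upgrades weak to strong $H^1_0$ convergence via norm convergence of gradients in a Hilbert space; these are precisely the details that the statement of the corollary requires and that the paper leaves implicit, and your execution of them is correct.
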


\subsection{Critical yield numbers and total variation minimization}

We now want to identify the limiting yield number $Y$ such that the solution of the exchange flow problem satisfies
$w\equiv 0$ in $\Omega$, i.e.~both solid and fluid motions are stagnating.

\begin{definition}
The critical yield number is defined to be
\begin{equation}
\label{eq:eigenvalue}
Y_c := \sup_{\hvdc} \frac{\int_{\Omega_s} v}{\int_\Omega \abs{\nabla v}}\;.
\end{equation}
\end{definition}
Assume that $w_c$ minimizes $\Gd$, defined in \eqref{eq:F}. \rev{Since $u \mapsto \frac12 \int |Du|^2$ is G\^{a}teaux differentiable in $H^1_0$ and convex, we have that for any $v \in \hvdc$,
$$\int_{\Omega} \nabla w_c \cdot (\nabla v-\nabla w_c) + Y \int_\Omega \abs{\nabla v} - Y \int_\Omega \abs{\nabla w_c} -\int_{\Omega_s} f (v-w_c) \gs 0.$$
Using $v = 2w_c$ and $v=0$ (as in \cite[Sections I.3.5.4 and VI.8.2]{DuvLio76}), we obtain
}
\begin{eqnarray*}
\int_\Omega \abs{\nabla w_c}^2 &=& \int_{\Omega_f} \abs{\nabla w_c}^2
= \int_{\Omega_s} w_c - Y \int_{\Omega_f} \abs{\nabla w_c} \\
& \ls &  \int_{\Omega_f} \abs{\nabla w_c} \left[  \sup_{\hvdc}
\frac{\int_{\Omega_s} v}{\int_{\Omega_f} \abs{\nabla v}} - Y \right]
= (Y_c-Y) \int_{\Omega_f} \abs{\nabla w_c}.
\end{eqnarray*}
Thus $w_c \equiv 0$ if $Y\gs
Y_c$.

\begin{ass}\label{ass:linkedparticles}
Even if functions in $\hvdc$ could take different values in different connected components of $\Omega_s$, in what follows we restrict ourselves to functions which are constant in $\Omega_s$. This assumption covers the cases in which $\Omega_s$ is connected (Examples \ref{ex:cylinders}, \ref{ex:SquareSquare}, \ref{ex:posbdy}), when there are two connected components arranged symetrically (Example \ref{ex:bridges}), or when a physical assumption can be made that the particles are linked and have the same possible velocities (Example \ref{ex:tubes}).
\end{ass}

Under assumption \ref{ass:linkedparticles} we set $v = 1$ in $\Omega_s$, and therefore we need to minimize the total variation over the set
\begin{equation}
H_{\diamond,1}^1(\Omega):=
\set{v \in H^1_0(\Omega) :  \int_\Omega v = 0\,, \; v \equiv 1 \text{ in }
\Omega_s}\;.
\label{eq:H1normalized}\end{equation}

It is easy to see that this functional does not necessarily attain a minimum. Hence we use standard relaxation techniques.

\paragraph{Relaxation}
A function $u \in L^1(\R^2)$ is said to be of bounded variation if its distributional gradient $Du$ is a vector valued Radon measure with finite mass, that is
\begin{equation*}
  TV(u) := \abs{Du}(\R^2) =  \sup \left\{\int_\Omega u~ \div z~dx : z \in C^\infty_0(\R^2; \R^2), \|z\|_{L^\infty} \ls 1\right\} < +\infty.
\end{equation*}
The class of
such functions is denoted by $BV(\R^2)$. 
The relaxation of minimizing $TV$ in $H_{\diamond,1}^1(\Omega)$ with respect to strong convergence in $L^1$ \rev{(note that the constraints are preserved)} turns out to be \cite[Prop. 11.3.2]{AttButMic06} minimizing total variation over the set
\begin{equation}\bvdo:=\set{v \in BV(\R^2) : \int_\Omega v = 0\,, \; v \equiv 1 \text{ in } \Omega_s, v \equiv 0 \text{ in } \R^2 \setminus \Omega}\;. \label{eq:BVnormalized}\end{equation}
Since $\bvdo \subseteq \text{BV}(\widetilde{\Omega})$ and $\text{BV}(\widetilde{\Omega}) \subseteq L^1(\widetilde{\Omega})$ with compact embedding (\cite[Cor. 3.49]{AmbFusPal00}) for every bounded $\widetilde{\Omega} \supseteq \Omega$ with $\text{dist}(\partial \Omega, \partial \widetilde{\Omega}) > 0$, the condition $\int_\Omega v = 0$ and compactness in the weak-* topology of $\text{BV}(\widetilde{\Omega})$ (\cite[Thm. 3.23]{AmbFusPal00}) imply that there exists at least one minimizer of $\text{TV}$ in $\bvdo$.
\begin{rem}
 Note that the total variation appearing in the relaxed problem is in $\R^2$, meaning that jumps at the boundary of $\Omega$ are counted. Likewise, in the rest of the paper, every time we speak of total variation with Dirichlet boundary conditions on the boundary of a set $A$, we mean the total variation in $\R^2$ of functions with their values fixed on $\R^2 \setminus A$.
\end{rem}

In the sequel we will repeatedly use the relation between total variation and perimeter of sets. A measurable set $E \subseteq \R^2$ is said to be of finite
perimeter in $\R^2$ if $1_{E}\in BV(\R^2)$, where $1_E$ is the indicatrix (or characteristic function) of the set $E$.
The perimeter of $E$ is defined as $\per{E}:= TV(1_{E}) $.

When $E$ is a set of finite perimeter
with Lipschitz boundary, its perimeter $\per{E}$ coincides with
$\H^1(\partial E)$, where $\H^1$ is the $1$-dimensional Hausdorff measure.
Moreover, we denote the Lebesgue measure of $E$ by $\abs{E}$, so that $\abs{E}:=\int_{\R^2} 1_E$.

We recall the so-called \emph{coarea formula} for $u \in BV(\R^2)$ compactly supported
\begin{equation} TV(u) = \int_{-\infty}^{\infty} \per(u > t) \dd t = \int_{-\infty}^\infty \per( u < t) \dd t, \label{eq:coarea}\end{equation}
as well as the \emph{layer cake formula}, valid for any nonnegative $u \in L^1(\R^2)$
\begin{equation}  \int_{\R^2} u = \int_{0}^\infty |\{u > t \}| \dd t.\label{eq:layercake} 
\end{equation}
For more details on $BV$-functions and finite perimeter sets we refer to \cite{AmbFusPal00}.

Particularly important for our analysis are \emph{Cheeger sets}:
\begin{definition}
 \label{de:ch} (see \cite{Par11})
 Let $\Omega_0$ be a set of finite perimeter. A set $E_0$ minimizing the ratio $$E \mapsto \frac{\per E}{|E|}$$ over
 subsets of $\Omega_0$, is called a \emph{Cheeger set} of $\Omega_0$. The quantity
 $$\lambda = \frac{\per E_0}{|E_0|}$$ is called the \emph{Cheeger constant} of $\Omega_0$.
If $\hat{\Omega}$ is open and bounded, at least one Cheeger set exists \cite[Prop. 3.5, iii)]{LeoPra16}.
Since being a Cheeger set is stable by union \rev{\cite[Prop. 3.5, vi)]{LeoPra16}}, there exists a unique maximal (with respect to $\subset$) Cheeger set.
\end{definition}

\section{Piecewise constant minimizers}
\label{se:pwcm}
We search now for simple minimizers of $TV$ over $\bvdo$. We prove that one can find a minimizer that attains only three values, one of them being zero. After investigation of the particularly simple case where $\Omega_s$ is convex, we tackle the general case in four steps.
\begin{itemize}
 \item Starting from a generic minimizer, in Proposition \ref{prop:cheeger1}, we construct a minimizer whose negative part is constant.
 \item Based on the minimizer with a constant negative part, we then construct a minimizer with constant
 positive part (Theorem \ref{thm:positive}). Thus there exists a minimizer with three different values, a negative one, a positive one (which is constrained to be $1$), and $0$.
 \item We formulate the total variation minimization for three-level functions as a geometrical problem for optimizing the
 characteristic sets of the positive and negative value and study the curvature of the corresponding interfaces.
 \item Finally, we show that we can obtain these optimized characteristic sets by solving two consecutive Cheeger-type problems
 (Theorem \ref{thm:consecutive}).
\end{itemize}

\subsection{Particular case: \texorpdfstring{$\Omega_s$}{The solid} is convex}
\begin{prop}
If $\Omega_s$ is convex, then the function
$$u_0 := 1_{\Omega_s} - \alpha 1_{\Omega_-},$$
where $\Omega_-$ is a Cheeger set of $\Omega \setminus \Omega_s$ and $\alpha = \frac{\abs{\Omega_s}}{\abs{\Omega_-}}$,
is a minimizer of $TV$ in $\bvdo$.
\end{prop}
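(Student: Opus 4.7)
The plan is to use the coarea formula \eqref{eq:coarea} to split $TV(u)$ for a generic $u\in\bvdo$ into a positive part ($t>0$) and a negative part ($t<0$), and bound each by a different geometric tool: convexity of $\Omega_s$ for the positive part and the Cheeger property of $\Omegao$ for the negative part. A direct coarea computation already gives $TV(u_0) = \per{\Omega_s} + \alpha \per{\Omegao}$, since $\set{u_0>t}=\Omega_s$ for $t\in(0,1)$ and its complement equals $\Omegao$ for $t\in(-\alpha,0)$; so this is the benchmark to match from below.

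For $t\in(0,1)$, the constraint $u\equiv 1$ on $\Omega_s$ forces $\Omega_s\subseteq \set{u>t}$, and I would invoke the planar fact that, when $\Omega_s$ is convex, every set $A$ of finite perimeter with $A\supseteq \Omega_s$ satisfies $\per{A}\gs \per{\Omega_s}$ (which follows by combining the fact that the convex hull operation does not increase perimeter in $\R^2$ with the monotonicity of perimeter for nested planar convex sets, applied to $\Omega_s \subseteq \operatorname{co}(\overline A)$). Integrating over $t \in (0,1)$ thus contributes at least $\per{\Omega_s}$. For $t<0$, the conditions $u\equiv 0$ on $\R^2\setminus\Omega$ and $u\equiv 1$ on $\Omega_s$ force $B_t:=\set{u\ls t}\subseteq \Omega\setminus\Omega_s$, so $B_t$ is a competitor for the Cheeger problem on $\Omega\setminus\Omega_s$, giving $\per{B_t}\gs \lambda\abs{B_t}$ with $\lambda=\per{\Omegao}/\abs{\Omegao}$. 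Integrating via coarea and layer cake \eqref{eq:layercake} yields
\[
 \int_{-\infty}^0 \per{B_t}\dd t \gs \lambda\int u^-.
\]
The constraint $\int u=0$ combined with $u\equiv 1$ on $\Omega_s$ gives $\int u^- = \abs{\Omega_s}+\int_{\Omega\setminus\Omega_s} u^+ \gs \abs{\Omega_s}$, and since $\lambda\abs{\Omega_s}=\alpha\per{\Omegao}$, the negative-$t$ contribution is at least $\alpha\per{\Omegao}$. Summing the two estimates yields $TV(u)\gs TV(u_0)$, as required.

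The main obstacle is the convexity-based perimeter inequality $\per{A}\gs \per{\Omega_s}$ for $A\supseteq \Omega_s$: this is precisely where the convexity hypothesis is used, and it is exactly the step that fails in general, since for non-convex $\Omega_s$ one can typically find $A\supseteq \Omega_s$ of strictly smaller perimeter (by ``cutting off'' concavities). This is presumably what forces the more elaborate two-step Cheeger construction carried out later in the section.
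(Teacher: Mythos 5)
Your argument is essentially the paper's: both rest on splitting $TV(u)$ into the contributions of the positive and negative levels via the coarea formula, using the convexity of $\Omega_s$ to bound the positive part from below by $\per(\Omega_s)$, and using the Cheeger property of $\Omega_-$ to bound the negative part by $\lambda \int u^- \gs \lambda \abs{\Omega_s} = \alpha\per(\Omega_-)$. Organizing this as a direct lower bound $TV(u)\gs TV(u_0)$ valid for every $u\in\bvdo$ is, if anything, cleaner than the paper's route, which successively modifies a putative minimizer (first truncating at $1$, then replacing $u^+$ by $1_{\Omega_s}$ and rescaling $u^-$ to restore the zero-mean constraint); in particular you can simply discard the levels $t\gs 1$ in the lower bound instead of first proving $u\ls 1$.

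The one step that does not hold as written is your justification of $\per(A)\gs\per(\Omega_s)$ for $A\supseteq\Omega_s$. The claim that passing to the convex hull does not increase perimeter is false for disconnected sets (two small disks far apart have much smaller perimeter than their convex hull), and the level sets $\set{u>t}$ of a generic competitor need not be connected, so the chain $\per(A)\gs\per(\operatorname{co}(\overline A))\gs\per(\Omega_s)$ breaks at its first link. The inequality itself is true and is exactly the one the paper invokes; the correct reason is that intersection with a convex set does not increase perimeter (equivalently, as the paper says, the nearest-point projection onto the convex set $\Omega_s$ is a contraction), which gives $\per(\Omega_s)=\per(A\cap\Omega_s)\ls\per(A)$ directly. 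With that substitution your proof is complete.
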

\begin{proof}
Let $u$ be a minimizer. We write $$u = u^+ - u^-, \text{ with } u^+, u^- \gs 0.$$
Then, we have (by the coarea formula for example)
\begin{equation}
\label{tv+-}
TV(u) = TV(u^+) + TV(u^-).
\end{equation}
Firstly, note that $u \ls 1$: indeed, if $\abs{\set{u>1}} > 0$, then the function
$$\hat{u}:=u\cdot 1_{\{0 < u < 1\}} + 1_{\{u \gs 1\}} - \frac{\int u\cdot 1_{\{0 < u < 1\}} + 1_{\{u \gs 1\}}}{\int u^+} u^-.$$
satisfies $\int \hat{u}=0$ because $\int u^- = \int u^+$, and moreover
\begin{equation*}
 \begin{aligned}
  TV(\hat{u}) &= TV (u\cdot 1_{\{0 < u < 1\}} + 1_{\{u \gs 1\}}) + \frac{\int u\cdot 1_{\{0 < u < 1\}} + 1_{\{u \gs 1\}}}{\int u^+} TV(u^-) \\
  & < TV (u^+) + TV (u^-),
 \end{aligned}
\end{equation*}
which contradicts that $u$ is a minimizer.

Then, let us prove that we can choose $u^+ = 1_{\Omega_s}.$ Thanks to the coarea formula,
$$TV(u^+) = \int_{t=0}^1 \per( u > t) \dd t.$$
Since $u=1$ on $\Omega_s$, for every $0 < t < 1$, we have $\{u\gs t\} \supset \Omega_s$ which implies that $\per(u> t) \gs \per \Omega_s$ by the convexity of $\Omega_s$ (since the projection onto a convex set is a contraction). As a result, we reduce the total variation of $u^+$ by replacing it with $1_{\Omega_s}.$ Replacing then $u^-$ by $\eta u^-$ where $\eta = \frac{|\Omega_s|}{\int u^+} < 1$, we produce a competitor $\tilde u=1_{\Omega_s} - \eta u^-$, which has, since $u$ is a minimizer, the same total variation as $u$.

Now, notice that $\tilde u^-$ minimizes total variation with constraints
\begin{equation*}
 u=0 \text{ on } ( \R^2 \setminus \Omega ) \cup \Omega_s,\quad \int \tilde u^- = |\Omega_s|.
\end{equation*}
We can link this to the Cheeger problem in $\Omega \setminus \Omega_s.$ We denote
$$ \lambda = \min_{E \subset (\Omega \setminus \Omega_s)} \frac{\per E}{|E|}$$
and $E_0$ a minimizer of this ratio. Then, one can write, observing that for $t \ls 0$, $\{\tilde u < t\} \subset (\Omega \setminus \Omega_s)$
\begin{align*}TV(\tilde u^-) = \int_{-\infty}^0 \per (\tilde u < t) \dd t &\gs \lambda \int_{-\infty}^0 |\tilde u < t| \dd t= \lambda \int \tilde u^- \\&=\lambda|\Omega_s|=  \frac{\per E_0}{|E_0|}|\Omega_s| = TV\left( \frac{|\Omega_s|}{|E_0|} 1_{E_0} \right).\end{align*}
Finally, \eqref{tv+-} implies that the function
$$u_0 := 1_{\Omega_s} - \frac{|\Omega_s|}{|E_0|} 1_{E_0} $$
is a minimizer of $TV$ which has the expected form.
\end{proof}

\subsection{General case (\texorpdfstring{$\Omega_s$}{The solid} not convex)}
For any minimizer $u$ on $TV$ in $\bvdo$, there exists a (possibly different) minimizer in which $u^-$ is replaced by a constant function on the characteristic set of the negative part of $u^-$. 
\begin{prop}\label{prop:cheeger1}
Let $\Theta_+ := \supp u^+$. Then,
\begin{equation}\label{eq:u0}
u_0 := u^+ - \frac{\int u^+}{|\Omega_-|} 1_{\Omega_-},
\end{equation}
where $\Omega_-$ is a Cheeger set of $\Omega \setminus \Theta_+$, is a minimizer of $TV$ on $\bvdo$.
In addition, for every $t \ls 0$, the level-sets $\{u<t\}$ are also Cheeger sets of $\Omega \setminus \Theta_+.$
\end{prop}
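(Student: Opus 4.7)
The plan is to start from an arbitrary minimizer $u$ of $TV$ on $\bvdo$, decompose it as $u = u^+ - u^-$ with disjoint supports (so that, exactly as in the convex case, $TV(u) = TV(u^+) + TV(u^-)$), and then show that $u_0$ does no worse than $u$. The constraints defining $\bvdo$ force $\int u^+ = \int u^-$, $u^+ \equiv 1$ on $\Omega_s$ (so $\Theta_+ \supseteq \Omega_s$), and $\supp u^- \subseteq \Omega \setminus \Theta_+$. In particular $\Omega_- \cap \Omega_s = \emptyset$, which is what will guarantee admissibility of $u_0$.

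The key step is a Cheeger-type lower bound on $TV(u^-)$. Writing $\lambda = \per(\Omega_-)/|\Omega_-|$ for the Cheeger constant of $\Omega \setminus \Theta_+$, every superlevel set $\{u^- > s\}$ with $s > 0$ sits inside $\Omega \setminus \Theta_+$, so by definition of $\lambda$ we have $\per(\{u^- > s\}) \gs \lambda |\{u^- > s\}|$. Integrating in $s$ and combining the coarea formula \eqref{eq:coarea} with the layer cake formula \eqref{eq:layercake} then yields
\begin{equation*}
TV(u^-) = \int_0^\infty \per(\{u^- > s\}) \dd s \gs \lambda \int_0^\infty |\{u^- > s\}| \dd s = \lambda \int u^- = \lambda \int u^+.
\end{equation*}

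Next I would verify that $u_0$ lies in $\bvdo$ (the equal integrals on $u^+$ and the scaled indicator of $\Omega_-$ give $\int u_0 = 0$, $\Omega_- \cap \Omega_s = \emptyset$ gives $u_0 \equiv 1$ on $\Omega_s$, and $u_0$ is supported in $\Omega$) and compute directly
\begin{equation*}
TV(u_0) = TV(u^+) + \tfrac{\int u^+}{|\Omega_-|}\per(\Omega_-) = TV(u^+) + \lambda \int u^+ \ls TV(u^+) + TV(u^-) = TV(u),
\end{equation*}
so that by minimality of $u$, $u_0$ is itself a minimizer and each inequality above must be an equality.

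The level-set statement then drops out of the equality in the coarea-Cheeger chain, which forces $\per(\{u^- > s\}) = \lambda |\{u^- > s\}|$ for almost every $s > 0$, i.e.\ $\{u^- > s\}$ is a Cheeger set of $\Omega \setminus \Theta_+$ for a.e.\ such $s$. The step I expect to be most delicate is upgrading this from a.e.\ to every $t \ls 0$: for an arbitrary $s > 0$ I would pick a decreasing sequence $s_n \downarrow s$ along which $\{u^- > s_n\}$ is Cheeger, use monotone convergence for $\{u^- > s_n\} \nearrow \{u^- > s\}$ on the measures, and invoke lower semicontinuity of the perimeter to get $\per(\{u^- > s\}) \ls \lambda |\{u^- > s\}|$; combined with the universal Cheeger inequality this forces equality. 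The endpoint $t = 0$ follows by the same argument letting $s \downarrow 0$, and translating $s = -t$ recovers the claim for $\{u < t\}$ with $t \ls 0$.
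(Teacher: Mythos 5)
Your proof is correct and takes essentially the same route as the paper's: replace $u^-$ by a suitably scaled indicator of a Cheeger set of $\Omega \setminus \Theta_+$, justify optimality of the competitor $u_0$ through the coarea/layer-cake estimate $TV(u^-) \gs \lambda \int u^-$, and extract the level-set claim from the equality case of that chain (the paper phrases the first step as minimization of the ratio $TV(v)/\int v$ and cites the fact that an indicatrix attains it, but the substance is identical). Your final upgrade from ``a.e.\ $t$'' to ``every $t \ls 0$'' via monotone convergence of the level sets and lower semicontinuity of the perimeter is a welcome refinement, since the paper's own argument only establishes the Cheeger property for almost every $t$.
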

\begin{proof}
 First, we notice that $u^-$ minimizes $TV$ with constraints $\int u^- = \int u^+$ and $u^- = 0$ on $\Theta_+ \cup (\R^2 \setminus \Omega)$. Let us show that $u^-$ minimizes
$ \frac{TV(v)}{\int v}$
 among all functions supported in $\overline{\Omega\setminus \Theta_+}.$
 Indeed, if we have, for such a $v$, $$\frac{TV(u^-)}{\int u^-} > \frac{TV(v)}{\int v},$$ then $v^- := \frac{\int u^+}{\int v} v$ satisfies $TV(v^-) = \frac{\int u^+}{\int v}  TV (v) < TV(u^-)$, which is a contradiction.
Then, it is well known (see, once again, \cite{Par11}) that the minimizer $v$ can be chosen as an indicatrix of a Cheeger set $\Omega_-$ of $\Omega \setminus \Theta_+$. That shows that $u_0$ is a minimizer.

Now, just introduce $\lambda = \frac{\per \Omega_-}{|\Omega_-|}$ and use the previous computations to write
\begin{align*}
 \lambda \int u^+   & = TV(u^-) = \int_{-\infty}^0 \per(u<t) \dd t= \int_{-\infty}^0 \frac{\per(u<t)}{|u< t|} | u < t| \dd t\\
 &\gs \int_{-\infty}^0 \lambda |u<t| \dd t = \lambda \int u^-.
\end{align*}
Since $\int u^+ = \int u^-$, all these inequalities are equalities and for a.e. $t$, we have $\frac{\per(u<t)}{|u< t|} = \lambda$ and $\{u<t\}$ is therefore a Cheeger set of $\Omega \setminus \Theta_+.$
\end{proof}

In the following, starting from $u_0$, we show that there exists another minimizer of $TV$ if we replace $u_0^+$ by
the indicatrix of a set $\Omega_1$.
\begin{thm}
There exists a minimizer of $TV$ in $\bvdo$ which has the form
\begin{equation}\label{eq:uc}
u_c := 1_{\Omega_1} - \frac{|\Omega_1|}{|\Omega_-|} 1_{\Omega_-},
\end{equation}
where $\Omega_1$ is a minimizer of the functional
\begin{equation}
\mathcal{T}(E):=\per(E) + \frac{\per(\Omega_-)}{|\Omega_-|} |E|
\label{eq:omega1}
\end{equation}
over Borel sets $E$ with $\Omega_s \subset E \subset \Omega \setminus \Omega_-$. In fact, for every $0 \ls t < 1$, the level-sets $E_t:=\{u>t\}$ of every minimizer $u$ minimize $\mathcal{T}$.
\label{thm:positive}
\end{thm}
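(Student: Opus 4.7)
The plan is to start from an arbitrary minimizer $u$ of $TV$ over $\bvdo$, reduce it via Proposition \ref{prop:cheeger1} to a function $u_0$ whose negative part is already a constant multiple of $1_{\Omega_-}$, and then decompose $TV(u_0)$ using the coarea and layer cake formulas so that the integrand becomes exactly $\mathcal{T}$ applied to the superlevel sets of $u_0^+$. Minimizing $\mathcal{T}$ separately over admissible sets then produces the three-valued competitor $u_c$, and the equality case of the resulting integral inequality yields the statement about level sets of a general minimizer.

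\textbf{Preliminary truncation.} First I would reduce to the case $u \leqslant 1$: the clipping argument from the convex proposition (replace $u^+$ by $\min(u^+,1)$ and rescale $u^-$ to preserve $\int u = 0$) strictly decreases $TV$ whenever $|\{u > 1\}| > 0$, and this step does not actually use convexity of $\Omega_s$. Proposition \ref{prop:cheeger1} then fixes $\Omega_-$ (a Cheeger set of $\Omega\setminus\Theta_+$) and puts the minimizer in the form $u_0 = u^+ - c\, 1_{\Omega_-}$ with $c|\Omega_-| = \int u^+$.

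\textbf{Coarea/layer-cake decomposition.} Writing $E_t := \{u_0 > t\}$, the coarea formula gives $TV(u_0^+) = \int_0^1 \per(E_t)\,\dd t$ (using $u_0^+ \leqslant 1$) and the layer cake formula gives $\int u^+ = \int_0^1 |E_t|\,\dd t$. Combined with $TV(u_0^-) = c\,\per(\Omega_-) = \frac{\per \Omega_-}{|\Omega_-|} \int_0^1 |E_t|\,\dd t$, this yields
\begin{equation*}
TV(u_0) = \int_0^1 \left[\per(E_t) + \frac{\per \Omega_-}{|\Omega_-|}|E_t|\right] \dd t = \int_0^1 \mathcal{T}(E_t)\,\dd t.
\end{equation*}
For every $t \in [0,1)$ the set $E_t$ is admissible for $\mathcal{T}$: $\Omega_s \subset E_t$ because $u_0 \equiv 1$ on $\Omega_s$, and $E_t \cap \Omega_- = \emptyset$ because $u_0 \equiv -c < 0$ on $\Omega_-$. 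A direct-method argument in $BV$ (bounded perimeter plus support in $\Omega$ gives $L^1$-compactness; the constraints and $\mathcal{T}$ are stable under $L^1$-convergence) produces a minimizer $\Omega_1$ of $\mathcal{T}$, and therefore $TV(u_0) \geqslant \mathcal{T}(\Omega_1)$. Conversely, $u_c$ defined by \eqref{eq:uc} lies in $\bvdo$ ($\int u_c = 0$, $u_c = 1$ on $\Omega_s \subset \Omega_1$, $u_c = 0$ outside $\Omega$) and has $TV(u_c) = \per(\Omega_1) + \frac{|\Omega_1|}{|\Omega_-|}\per(\Omega_-) = \mathcal{T}(\Omega_1)$. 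Optimality of $u_0$ forces equality throughout, so $u_c$ is itself a minimizer.

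\textbf{Level-set identification and main difficulty.} The equality $\int_0^1 \mathcal{T}(E_t)\,\dd t = \mathcal{T}(\Omega_1)$ together with $\mathcal{T}(E_t) \geqslant \mathcal{T}(\Omega_1)$ forces $\mathcal{T}(E_t) = \mathcal{T}(\Omega_1)$ for a.e.\ $t \in [0,1)$. To upgrade to every such $t$ I would pick $s_n \searrow t$ inside the full-measure set; monotonicity of the level sets gives $1_{E_{s_n}} \to 1_{E_t}$ in $L^1$, lower semicontinuity of $\per$ together with continuity of Lebesgue measure under monotone convergence gives $\mathcal{T}(E_t) \leqslant \mathcal{T}(\Omega_1)$, and admissibility of $E_t$ provides the reverse inequality. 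The main technical point is setting up the coarea/layer-cake decomposition so that it matches $\mathcal{T}$ exactly, which crucially uses both the truncation $u \leqslant 1$ and the precise value $c = |\Omega_-|^{-1}\int u^+$; once this is in place, existence of $\Omega_1$ and the passage from a.e.\ $t$ to every $t$ are essentially routine.
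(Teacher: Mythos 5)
Your proposal is correct and follows essentially the same route as the paper: reduce to $u_0$ via Proposition \ref{prop:cheeger1}, split $TV(u_0)=TV(u_0^+)+TV(u_0^-)$, and use the coarea and layer-cake formulas to write $TV(u_0)=\int_0^1 \mathcal{T}(E_t)\,\dd t \gs \mathcal{T}(\Omega_1)$, with the equality case giving the level-set statement. You additionally make explicit several points the paper leaves implicit (the truncation $u\ls 1$ without convexity of $\Omega_s$, admissibility of the $E_t$, existence of $\Omega_1$, and the upgrade from a.e.\ $t$ to every $t$), all of which are handled correctly.
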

\begin{proof}
Let $u_0$ be the minimizer of $TV$ in $\bvdo$ from \eqref{eq:u0}. Then
\begin{equation*}
 TV(u_0) = TV(u_0^+) + TV(u_0^-) = TV(u_0^+) + \frac{\per(\Omega_-)}{|\Omega_-|} \int u_0^+
\end{equation*}
Then from \eqref{eq:coarea}, \eqref{eq:layercake}, and \eqref{eq:omega1} it follows:
\begin{equation*}
TV(u_0)=\int_0^1 \per(u_0 > t) + \frac{\per(\Omega_-)}{|\Omega_-|} |u_0 > t| \dd t = \int_0^1 \mathcal{T} (u_0 > t)\dd t \geq \mathcal{T}(\Omega_1).
\end{equation*}
That means, that if we replace $u^+$ by $1_{\Omega_1}$, $TV$ is decreased and thus
$$TV(u_c) \ls TV(u_0) \ls TV(u).$$
Because $u_c$ satisfies $\int u_c = 0$ we see from the last inequality that $u_c$ is a minimizer of $TV$ in $\bvdo$. As before, since $u$ is a minimizer, the inequalities are equalities and we infer the last statement.
\end{proof}

\subsection{Geometrical properties of three-valued minimizers}
We introduce the class
$$ M := \left \{ (E_1,E_-) \subset \Omega \ \middle \vert \ \bigring{E_1} \cap \bigring{E_-} = \emptyset, \ \Omega_s \subset E_1 \right\}.$$
and the functional
$$\mathcal S(E_1,E_-) = \per(E_1) + \frac{|E_1|}{|E_-|} \per(E_-).$$
In addition, for $(E_1,E_-) \in M$ we define the function
\begin{equation*}
 u_c(E_1,E_-) = 1_{E_1} - \frac{|E_1|}{|E_-|} 1_{E_-}\;.
\end{equation*}

\begin{prop}
$\mathcal S$ has a minimizer in $M$. In addition, the second part of every minimizer has positive Lebesgue measure.
\label{prop:existS}
\end{prop}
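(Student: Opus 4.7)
The natural approach is the direct method of the calculus of variations. First, $\inf_M\mathcal{S}<\infty$: take for instance $E_1=\Omega_s$ and $E_-=B\subset\Omega\setminus\Omega_s$ a small disk. Let $(E_1^n,E_-^n)\in M$ be a minimizing sequence, with $\mathcal{S}(E_1^n,E_-^n)\ls C$. Reading off the definition of $\mathcal{S}$, $\per(E_1^n)\ls C$; since $\abs{\Omega_s}\ls\abs{E_1^n}$ and $\abs{E_-^n}\ls\abs{\Omega}$, also $\per(E_-^n)\ls C\abs{\Omega}/\abs{\Omega_s}$.

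The crucial step, which I expect to be the main obstacle, is a uniform lower bound on $\abs{E_-^n}$. The planar isoperimetric inequality yields
\begin{equation*}
C\gs\frac{\abs{E_1^n}}{\abs{E_-^n}}\per(E_-^n)\gs\frac{\abs{\Omega_s}}{\abs{E_-^n}}\cdot 2\sqrt{\pi\abs{E_-^n}}=\frac{2\sqrt{\pi}\,\abs{\Omega_s}}{\sqrt{\abs{E_-^n}}},
\end{equation*}
so $\abs{E_-^n}\gs 4\pi\abs{\Omega_s}^2/C^2>0$. Without this bound, the ratio $\abs{E_1^n}/\abs{E_-^n}$ could blow up while $\per(E_-^n)\to 0$, and the limit set $E_-$ could collapse, rendering $\mathcal{S}$ undefined at the limit.

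Given these estimates, $\{1_{E_1^n}\}$ and $\{1_{E_-^n}\}$ are bounded in $BV$ over a bounded open neighborhood of $\Omega$, so the compact embedding $BV\hookrightarrow L^1$ (\cite[Cor. 3.49]{AmbFusPal00}) provides subsequences converging in $L^1$ to indicatrices $1_{E_1},1_{E_-}$. Hence $\abs{E_i^n}\to\abs{E_i}$, and by weak-$*$ lower semicontinuity of total variation, $\per(E_i)\ls\liminf\per(E_i^n)$; combined with $\abs{E_-}\gs 4\pi\abs{\Omega_s}^2/C^2>0$, this gives $\mathcal{S}(E_1,E_-)\ls\liminf\mathcal{S}(E_1^n,E_-^n)=\inf_M\mathcal{S}$.

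Admissibility $(E_1,E_-)\in M$ is inherited in the limit: $1_{\Omega_s}\ls 1_{E_1^n}$ passes to $L^1$ limits, yielding $\Omega_s\subset E_1$ almost everywhere, and the disjointness of interiors — read in the essential form $\abs{E_1^n\cap E_-^n}=0$, which is legitimate for finite perimeter sets by choosing representatives with boundary of zero Lebesgue measure — is preserved since $1_{E_1^n}1_{E_-^n}\to 1_{E_1}1_{E_-}$ in $L^1$. Finally, the positivity $\abs{E_-}>0$ for every minimizer follows immediately from the isoperimetric estimate above applied with $C=\inf_M\mathcal{S}$, under the natural convention $\mathcal{S}(E_1,E_-)=+\infty$ whenever $\abs{E_-}=0$.
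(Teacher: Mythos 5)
Your proof is correct and takes essentially the same approach as the paper's: the direct method, with perimeter bounds extracted from the constraints $\Omega_s \subset E_1^n$ and $E_-^n \subset \Omega$, and the isoperimetric inequality used in exactly the same way to bound $\abs{E_-^n}$ away from zero and to conclude positivity of $\abs{E_-}$ for every minimizer. Your write-up merely fills in details (lower semicontinuity of the quotient term, preservation of admissibility in the $L^1$ limit) that the paper leaves implicit.
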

\begin{proof}
Let $(E_1^n, E_-^n)$ be a minimizing sequence for $\mathcal S$ in $M$. The conditions $\Omega_s \subset E_1$ and $E_- \subset \Omega$ ensure that
$\per(E_1^n) + \per(E_-^n) \ls C,$
so that standard compactness and lower semicontinuity results for sets of finite perimeter \cite{AmbFusPal00} imply existence of a minimizer. \rev{Note that non-empty interiors have positive measure, so the class $M$ is preserved by $L^1$ convergence. Moreover, using the isoperimetric inequality we get
\begin{equation*}
\per(E)\gs \sqrt{4 \pi} |E|^{\frac{1}{2}},\text{ so that }\frac{\per(E)}{|E|}\gs \sqrt{4 \pi} |E|^{-\frac{1}{2}},
\end{equation*}
therefore $|E_-^n|$ is bounded away from zero and the corresponding part of the minimizer has positive measure.
}
\end{proof}
Using Theorem \ref{thm:positive}, we see that the connection between minimizing $TV$ in $\bvdo$ and minimizing $\mathcal S$ is as follows:
\begin{prop}
If the function
$u_c := u_c(\Omega_1,\Omega_-)$ minimizes $TV$ in $\bvdo$, then $(\Omega_1,\Omega_-)$ minimizes $\mathcal S$ in $M$. Conversely, if $(\Omega_1,\Omega_-)$ minimizes $\mathcal S$ in $M$, then $u_c(\Omega_1,\Omega_-)$ minimizes $TV$ in $\bvdo$.
\end{prop}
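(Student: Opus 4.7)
The plan is to establish a single key identity linking the two problems and then read off both implications as immediate consequences. Specifically, for every pair $(E_1, E_-) \in M$, I would show that $u_c(E_1, E_-) \in \bvdo$ and
\begin{equation*}
TV(u_c(E_1, E_-)) = \mathcal{S}(E_1, E_-).
\end{equation*}
Membership in $\bvdo$ is direct from the definition: the coefficient $|E_1|/|E_-|$ is precisely chosen so that $\int u_c = 0$; $u_c \equiv 1$ on $\Omega_s$ follows from $\Omega_s \subset E_1$; and $u_c \equiv 0$ on $\R^2 \setminus \Omega$ since $E_1, E_- \subset \Omega$. For the TV identity, I would apply the coarea formula \eqref{eq:coarea} to the three-valued function $u_c$. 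Writing $\alpha = |E_1|/|E_-|$, the superlevel sets are $\{u_c > t\} = E_1$ for $t \in (0,1)$ and $\{u_c > t\} = \R^2 \setminus E_-$ (up to a null set, using $\bigring{E_1} \cap \bigring{E_-} = \emptyset$) for $t \in (-\alpha, 0)$. Since $\per(\R^2 \setminus E_-) = \per(E_-)$, integration yields $\per(E_1) + \alpha \per(E_-) = \mathcal{S}(E_1, E_-)$.

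Once this identity is in hand, I would deduce that the two minimum values coincide. On one side, every $(E_1, E_-) \in M$ supplies a competitor in $\bvdo$, so $\min_{\bvdo} TV \leq \inf_M \mathcal{S}$. On the other side, Theorem \ref{thm:positive} provides a minimizer of $TV$ in $\bvdo$ of the form $u_c(\Omega_1^*, \Omega_-^*)$; the condition $\Omega_s \subset \Omega_1^*$ is built into the statement, and the disjointness of interiors is forced by the constraint $E \subset \Omega \setminus \Omega_-^*$ in the definition of $\Omega_1^*$, so $(\Omega_1^*, \Omega_-^*) \in M$. Applying the identity once more yields $\min_{\bvdo} TV = \mathcal{S}(\Omega_1^*, \Omega_-^*) \geq \min_M \mathcal{S}$, where the infimum is attained by Proposition \ref{prop:existS}.

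Both implications of the proposition then fall out immediately. If $u_c(\Omega_1, \Omega_-)$ minimizes $TV$, then $\mathcal{S}(\Omega_1, \Omega_-) = TV(u_c) = \min_{\bvdo} TV = \min_M \mathcal{S}$, so $(\Omega_1, \Omega_-)$ minimizes $\mathcal{S}$. Conversely, if $(\Omega_1, \Omega_-)$ minimizes $\mathcal{S}$, then $u_c(\Omega_1, \Omega_-) \in \bvdo$ and $TV(u_c(\Omega_1, \Omega_-)) = \min_M \mathcal{S} = \min_{\bvdo} TV$, so it is a $TV$-minimizer.

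The only genuinely delicate point is the TV identity in the first step: one must be careful with the contribution of the shared boundary $\partial^* E_1 \cap \partial^* E_-$, where $u_c$ jumps directly from $1$ to $-\alpha$. In the coarea decomposition this piece is counted over the full range $t \in (-\alpha, 1)$, producing a contribution of $(1+\alpha)\, \mathcal{H}^1(\partial^* E_1 \cap \partial^* E_-)$, while in $\per(E_1) + \alpha \per(E_-)$ it is counted once with weight $1$ and once with weight $\alpha$; the two accountings agree, so the identity holds without requiring $\mathcal{H}^1(\partial^* E_1 \cap \partial^* E_-) = 0$. Everything else is bookkeeping with the preceding results.
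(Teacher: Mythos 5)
Your proposal is correct and follows essentially the paper's intended route: the paper states this proposition with no written proof beyond the pointer ``Using Theorem \ref{thm:positive}\dots'', and your argument---the coarea identity $TV(u_c(E_1,E_-))=\mathcal S(E_1,E_-)$ for pairs in $M$, followed by the comparison of minimum values via Theorem \ref{thm:positive} and Proposition \ref{prop:existS}---is exactly the reasoning being invoked, with the shared-boundary accounting correctly resolved.
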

\begin{remark}
The proposition explains why, in the following, we consider the shape optimization problem of minimizing
$\mathcal S$ in $M$.  \\
We remark that this produces minimizers of $TV$ in $\bvdo$ of a certain (geometric) form, which are not necessarily all of them.
\end{remark}

\begin{figure}
\begin{center}
\includegraphics[scale=0.4]{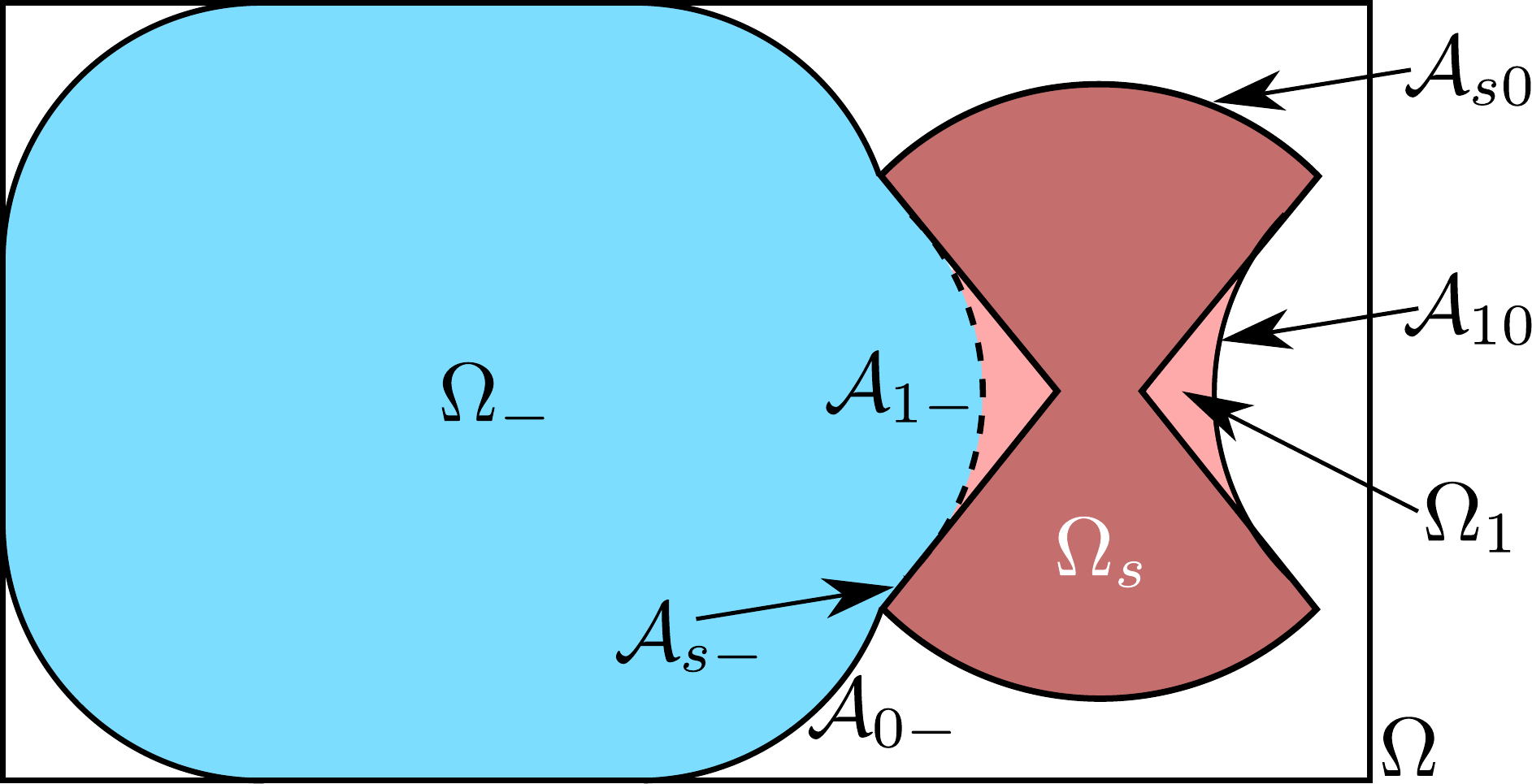}
\end{center}
\caption{Interfaces present in minimizers of $\mathcal S$.}
\label{fig:interfaces}
\end{figure}

In what follows, we consider small perturbations of a minimizer $(\Omega_1,\Omega_-)$ of $\mathcal S$ in which only one of the sets is changed. This will be enough to determine the curvature of their boundaries, which we split as follows (see Figure \ref{fig:interfaces})
 \begin{equation*}
 \begin{array}{c}
  \mathcal{A}_{1-} = \set{ x \in \Omega: x \in \partial \Omega_1, x \in \partial \Omega_-}, \quad
  \mathcal{A}_{10} = \set{ x \in \Omega: x \in \partial \Omega_1, x \notin \partial \Omega_-},\\
  \mathcal{A}_{0-} = \set{ x \in \Omega: x \notin \partial \Omega_1, x \in \partial \Omega_-}, \quad
  \mathcal{A}_{s-} = \set{ x \in \Omega: x \in \partial \Omega_s, x \in \partial \Omega_-},\\
  \mathcal{A}_{s0} = \set{ x \in \Omega: x \in \partial \Omega_s, x \notin \partial \Omega_-}\;.
  \end{array}
\end{equation*}
We denote by $\kappa_1, \kappa_-$ the curvature functions of $\Omega_1, \Omega_-$, defined in $\partial \Omega_1,\partial
\Omega_-$ through their outer normals $n_1,n_-$ (i.e.~a circle has positive curvature).

For a generic set of finite perimeter in $\R^2$ only a distributional curvature is available \cite[Rem. 17.7]{Mag12}. However, since
$\Omega_1$ and $\Omega_-$ minimize the functionals $\mathcal{S}(\cdot, \Omega_-)$ and $\mathcal{S}(\Omega_1, \cdot)$ respectively,
regularity theorems for $\Lambda$-minimizers of the perimeter \cite[Thm. 26.3]{Mag12} are applicable to them. In consequence,
$\mathcal{A}_{1-}$, $\mathcal{A}_{0-}$ and $\mathcal{A}_{10} \setminus \mathcal{A}_{s0}$, are locally graphs of $C^{1,\gamma}$ functions.
Combined with standard regularity theory for uniformly elliptic equations \cite{GilTru01}, one obtains higher regularity, so that, in
particular, the curvatures $\kappa_1, \kappa_-$ are defined classically on those interfaces (on $\partial \Omega_s \cap \partial \Omega_1$,
no information is provided).

\begin{prop}
Let $(\rev{\Omega_1},\Omega_-)$ be a minimizer of $\mathcal{S}$. Then, the curvatures $\kappa_-$, $\kappa_1$
of the interfaces  $\mathcal A_{0-}$ and $\mathcal{A}_{10} \setminus \mathcal{A}_{s0}$ are given by
$$\kappa_- = \frac{\per \Omega_-}{|\Omega_-|} \text{ on } \mathcal A_{0-}
  \text{ and } \kappa_1 = - \frac{\per \Omega_-}{|\Omega_-|}\text{ on }\mathcal{A}_{10} \setminus \mathcal{A}_{s0}.$$
In consequence, $\mathcal{A}_{0-}$ and $\mathcal{A}_{10} \setminus \mathcal{A}_{s0}$ are composed of pieces of circles of
radius $\frac{|\Omega_-|}{\per \Omega_-}$.
 \label{prop:sameradius1}
\end{prop}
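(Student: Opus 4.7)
The plan is to exploit the split structure of $\mathcal{S}$: since $\Omega_-$ minimizes $\mathcal{S}(\Omega_1,\cdot)$ with $\Omega_1$ fixed, and $\Omega_1$ minimizes $\mathcal{S}(\cdot,\Omega_-)$ with $\Omega_-$ fixed, one can derive the two curvature identities independently by computing the first variation under a suitably chosen family of diffeomorphisms. The regularity statements just before the proposition guarantee that these first variations have the classical shape-derivative form, so that the Euler--Lagrange equations translate directly into pointwise curvature conditions on the free parts of the boundaries.

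For the interface $\mathcal{A}_{0-}$, I would take a smooth vector field $X \in C^\infty_c(\R^2;\R^2)$ whose support intersects $\partial \Omega_-$ only inside $\mathcal{A}_{0-}$, i.e.\ stays away from $\partial \Omega_1 \cup \partial \Omega_s \cup \partial \Omega$, and let $\Phi_t$ be its flow. The set $\Phi_t(\Omega_-)$ is then an admissible competitor paired with $\Omega_1$ for all sufficiently small $t$ (of either sign). Using the standard shape-derivative formulas
\begin{equation*}
\frac{d}{dt}\Big|_{t=0} \per(\Phi_t(\Omega_-)) = \int_{\partial \Omega_-} \kappa_- (X\cdot n_-) \dd \mathcal H^1, \qquad
\frac{d}{dt}\Big|_{t=0} |\Phi_t(\Omega_-)| = \int_{\partial \Omega_-} (X\cdot n_-) \dd \mathcal H^1,
\end{equation*}
and the fact that $\per \Omega_1$ does not depend on $t$, the vanishing of the first variation of $\mathcal{S}(\Omega_1,\cdot)$ yields
\begin{equation*}
\frac{|\Omega_1|}{|\Omega_-|} \int_{\mathcal{A}_{0-}} \kappa_- (X\cdot n_-) - \frac{|\Omega_1| \per \Omega_-}{|\Omega_-|^2} \int_{\mathcal{A}_{0-}} (X\cdot n_-) = 0.
\end{equation*}
Since $X\cdot n_-$ is arbitrary on $\mathcal{A}_{0-}$, the fundamental lemma of the calculus of variations gives $\kappa_- \equiv \per \Omega_- / |\Omega_-|$ on $\mathcal{A}_{0-}$.

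The argument for $\Omega_1$ is analogous. I would pick $X \in C^\infty_c(\R^2;\R^2)$ with support meeting $\partial \Omega_1$ only inside $\mathcal{A}_{10}\setminus \mathcal{A}_{s0}$, so that the flow $\Phi_t$ preserves the constraints $\Omega_s \subset \Phi_t(\Omega_1)$ and $\bigring{\Phi_t(\Omega_1)} \cap \bigring{\Omega_-} = \emptyset$. Here the part $|E_1|\per(\Omega_-)/|\Omega_-|$ of $\mathcal{S}$ contributes an extra volume term with coefficient $\per(\Omega_-)/|\Omega_-|$, yielding
\begin{equation*}
\int_{\mathcal{A}_{10}\setminus\mathcal{A}_{s0}} \Bigl( \kappa_1 + \frac{\per \Omega_-}{|\Omega_-|}\Bigr)(X\cdot n_1) \dd \mathcal H^1 = 0
\end{equation*}
for every such $X$, which forces $\kappa_1 \equiv -\per \Omega_- / |\Omega_-|$ pointwise on this interface. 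The final assertion that these interfaces are arcs of circles of radius $|\Omega_-|/\per \Omega_-$ then follows because a $C^{1,\gamma}$ curve in $\R^2$ with constant signed curvature is a piece of a circle (or a line, ruled out here by the positivity of $\per \Omega_-/|\Omega_-|$).

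The main subtlety is justifying that the free part of $\partial \Omega_-$ really is $\mathcal{A}_{0-}$ and that of $\partial \Omega_1$ really is $\mathcal{A}_{10}\setminus \mathcal{A}_{s0}$, i.e.\ that admissible two-sided perturbations exist there; for the other pieces ($\mathcal{A}_{1-}$, $\mathcal{A}_{s-}$, $\mathcal{A}_{s0}$) at least one direction of perturbation is forbidden by the constraints $\Omega_s\subset \Omega_1$ and $\bigring\Omega_1 \cap \bigring\Omega_- = \emptyset$, so only inequalities (not equalities) on the curvature can be deduced there. The regularity invoked above ensures that on the free pieces the interface is a smooth curve disjoint from the obstacles, so that bump vector fields with the required support properties are readily constructed and the shape-derivative computation is valid in the classical sense.
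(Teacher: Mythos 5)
Your proposal is correct and follows essentially the same route as the paper: both compute the first variation of $\mathcal{S}(\cdot,\Omega_-)$ and $\mathcal{S}(\Omega_1,\cdot)$ under normal perturbations localized on the free interfaces, invoke the previously established regularity to make the shape-derivative formulas classical, and conclude constant curvature, hence circular arcs. Your added remarks on why two-sided perturbations are admissible precisely on $\mathcal{A}_{0-}$ and $\mathcal{A}_{10}\setminus\mathcal{A}_{s0}$ (and only one-sided elsewhere) are a welcome clarification but not a different argument.
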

\begin{proof}
For every $x \in \mathcal{A}_{10} \setminus \mathcal{A}_{s0}$ we consider a perturbed domain $\Omega_1^w$ (see Figure \ref{fig:interfaces}),
such that $\Omega_1^w = (I + \overrightarrow{w}) (\Omega_1)$, where $\overrightarrow{w}$ is supported in a
neighborhood of $x$. Calling $w := \overrightarrow{w} \cdot n_1$ and thanks to the first variation formula
\cite[Thm.~17.5 and Rem.~17.6]{Mag12} we can develop the first variation of $\mathcal{S}(\cdot, \Omega_-)$ at a minimizer
$\Omega_1$ in direction $w$ and obtain
$$ \int_{\mathcal{A}_{10} \setminus \mathcal{A}_{s0}} \kappa_1 w + w \frac{\per(\Omega_-)}{|\Omega_-|} \dd \mathcal H^{1}=0.$$
Since $w$ was arbitrary, we get the optimality condition for $\Omega_1$:
$$\kappa_1 + \frac{\per(\Omega_-)}{|\Omega_-|} = 0 \quad \text{ in } \mathcal{A}_{10} \setminus \mathcal{A}_{s0}.$$
Proceeding similarly for $\Omega_-$ we obtain
$$\frac{1}{|\Omega_1|} \left( \frac{\kappa_-}{|\Omega_-|} - \frac{\per(\Omega_-)}{|\Omega_-|^2} \right)=0
\quad \text{ in } \mathcal{A}_{0-}.$$
This shows that the curvatures of $\mathcal A_{1-} \setminus \mathcal{A}_{s-}$ and
$\mathcal{A}_{1-} \setminus \mathcal{A}_{s-}$ are constant with values $\kappa_1=-\kappa_-=\frac{\per(\Omega_-)}{|\Omega_-|}$.
This in particular shows that these interfaces are composed of circles of radii
$\frac{|\Omega_-|}{\per(\Omega_-)}$.
\end{proof}

\begin{prop}
Let $(\rev{\Omega_1},\Omega_-)$ be a minimizer of $\mathcal{S}$. Then
$$ \kappa_- = \frac{\per \Omega_-}{|\Omega_-|} = - \kappa_1 \text{ on } \mathcal{A}_{1-} \setminus \mathcal{A}_{s-}.$$
Thus, $\mathcal{A}_{1-} \setminus \mathcal{A}_{s-}$ consists of pieces of circle with the same radius of
Proposition \ref{prop:sameradius1}.
 \label{prop:sameradius2}
\end{prop}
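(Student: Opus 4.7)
My plan is to adapt the first-variation argument of Proposition \ref{prop:sameradius1}, but to perturb the shared boundary $\mathcal{A}_{1-} \setminus \mathcal{A}_{s-}$ as a whole, carrying both $\Omega_1$ and $\Omega_-$ with it. The decisive new ingredient is a geometric identity: on $\mathcal{A}_{1-}$ the curves $\partial \Omega_1$ and $\partial \Omega_-$ coincide pointwise, and the admissibility constraint $\bigring{\Omega_1} \cap \bigring{\Omega_-} = \emptyset$ forces the two regions to lie on opposite sides, so $n_1 = -n_-$. Since the mean-curvature vector $\kappa\, n$ is intrinsic to the curve and independent of the choice of normal orientation, this gives
\[
\kappa_1 = -\kappa_- \quad \text{on } \mathcal{A}_{1-} \setminus \mathcal{A}_{s-}.
\]

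For $x \in \mathcal{A}_{1-} \setminus \mathcal{A}_{s-}$ I would take a smooth vector field $\vec w$ compactly supported in a small neighborhood of $x$ and set $\Omega_1^w := (I + \vec w)(\Omega_1)$, $\Omega_-^w := (I + \vec w)(\Omega_-)$. Since both regions are transported by the same diffeomorphism, the disjoint-interior constraint is preserved automatically; and since $\vec w$ vanishes near $\partial \Omega_s$ and, for small $\vec w$, stays away from $\partial \Omega$, both $\Omega_s \subset \Omega_1^w$ and $\Omega_-^w \subset \Omega$ continue to hold. Hence $(\Omega_1^w, \Omega_-^w) \in M$ regardless of the sign of $\vec w$, yielding a genuine two-sided variation. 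Writing $w := \vec w \cdot n_1 = -\vec w \cdot n_-$ and combining the first variation formulas for perimeter and Lebesgue measure (\cite[Thm.~17.5, Rem.~17.6]{Mag12}) with the quotient rule, the optimality of $(\Omega_1, \Omega_-)$ produces, with $\lambda := \per(\Omega_-)/|\Omega_-|$,
\[
0 = \delta \mathcal{S} = \int_{\mathcal{A}_{1-} \setminus \mathcal{A}_{s-}} \left[(\kappa_1 + \lambda) + \frac{|\Omega_1|}{|\Omega_-|}(\lambda - \kappa_-)\right] w \dd \mathcal{H}^1.
\]

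Since $w$ is arbitrary the bracket vanishes pointwise, and substituting $\kappa_1 = -\kappa_-$ collapses the condition to $\bigl(1 + |\Omega_1|/|\Omega_-|\bigr)(\lambda - \kappa_-) = 0$, forcing $\kappa_- = \lambda$ and $\kappa_1 = -\lambda$; the interface is therefore an arc of a circle of radius $|\Omega_-|/\per(\Omega_-)$. The main obstacle I expect is the careful bookkeeping for admissibility, ensuring the variation is genuinely two-sided, as otherwise one would only obtain the inequalities $\kappa_1 \le -\lambda$ and $\kappa_- \le \lambda$ (which, incidentally, together with the geometric identity would already close the argument). Defining $\kappa_1, \kappa_-$ classically on the shared interface is covered by the $C^{1,\gamma}$ regularity of $\Lambda$-minimizers of the perimeter already invoked before Proposition \ref{prop:sameradius1}, upgraded via the standard elliptic regularity of \cite{GilTru01}.
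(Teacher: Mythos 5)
Your proposal is correct and rests on the same two pillars as the paper's proof: the geometric identity $\kappa_1 = -\kappa_-$ coming from the coincidence of the two boundaries with opposite outer normals on $\mathcal{A}_{1-}\setminus\mathcal{A}_{s-}$, and the first variation of $\mathcal{S}$ computed via \cite[Thm.~17.5, Rem.~17.6]{Mag12}. Where you differ is the perturbation scheme. The paper keeps one set fixed at a time: fixing $\Omega_-$ and perturbing $\Omega_1$ only inward (the disjointness constraint forbids outward motion) gives the one-sided inequality $\kappa_1 \ls -\per(\Omega_-)/|\Omega_-|$, and symmetrically fixing $\Omega_1$ and pushing $\Omega_-$ inward gives $\kappa_- \ls \per(\Omega_-)/|\Omega_-|$; these two inequalities plus $\kappa_1=-\kappa_-$ pin down both curvatures. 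You instead transport both sets by the same diffeomorphism $(I+\vec w)$, which preserves $\bigring{E_1}\cap\bigring{E_-}=\emptyset$ automatically and so yields a genuinely two-sided variation and hence the Euler--Lagrange \emph{equality}
$(\kappa_1+\lambda) + \tfrac{|\Omega_1|}{|\Omega_-|}(\lambda-\kappa_-)=0$,
which together with $\kappa_1=-\kappa_-$ collapses to $\kappa_-=\lambda$. Both routes are sound; your joint variation is slightly slicker in that it produces an equality in one stroke (your first-variation computation, including the sign flip $\vec w\cdot n_- = -w$ in the quotient rule, checks out), while the paper's one-sided variations are more conservative about admissibility and, as you yourself observe, the resulting inequalities already suffice once combined with the normal-flip identity. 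The only bookkeeping point worth making explicit in your version is that $x$ should be taken in the relative interior of $\mathcal{A}_{1-}\setminus\mathcal{A}_{s-}$ with the support of $\vec w$ small enough that the only boundary pieces it meets are the shared interface (so no stray contributions from $\mathcal{A}_{10}$ or $\mathcal{A}_{0-}$ enter the variation), but this is the same localization implicitly used in the paper.
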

\begin{proof}
First, we note that since $\mathcal A_{1-} \setminus \mathcal A_{s-} \subset \partial \Omega_1 \cap \partial \Omega_-$, we must have
$$ \kappa_1 = - \kappa_-  \text{ on } \mathcal A_{1-} \setminus \mathcal A_{s-}.$$

Now, we perturb $\Omega_1$ while keeping $\Omega_-$ fixed. In this context, $\Omega_1$ is a minimizer of $E \mapsto \mathcal{S}(E, \Omega_-)$ with constraints $E \subset \Omega$ and $\Omega_1 \cap E = \emptyset$. Since $\Omega_-$ is fixed the second constraint allows only inward perturbations. We therefore perturb $\Omega_1$ in its exterior normal direction with a function $w \ls 0$ supported in $\mathcal{A}_{1-} \setminus \mathcal{A}_{s-}$.  The variation formula for $\Omega_1$ in direction $w$ provides
$$ \int_{\mathcal A_{1-} \setminus \mathcal A_{s-}} \kappa_1 w + \int w \frac{\per(\Omega_-)}{|\Omega_-|} \dd \mathcal H^{1} \gs 0,$$
which yields
$$\kappa_1 \ls - \frac{\per(\Omega_-)}{|\Omega_-|} \text{ on } \mathcal A_{1-} \setminus \mathcal A_{s-}.$$

Now, we fix $\Omega_1$ and perturb $\Omega_-$ similarly with $w \ls 0$, again supported in $\mathcal{A}_{1-} \setminus \mathcal{A}_{s-}$ (so the perturbation goes inside $\Omega_-$). Since $\Omega_-$ now minimizes $\mathcal{S}(\Omega_1, \cdot)$, we get
$$ \int_{\mathcal A_{1-} \setminus \mathcal A_{s-}}  w \kappa_- \frac{|\Omega_1|}{|\Omega_-|} -  w \frac{|\Omega_1|}{|\Omega_-|^2}\per(\Omega_-) \dd \mathcal H^1 \gs 0,$$
which gives
$$\kappa_- \ls \frac{\per(\Omega_-)}{|\Omega_-|} \text{ on } \mathcal{A}_{1-} \setminus \mathcal{A}_{s-}.$$
\end{proof}

\begin{prop}
Let $E$ be a connected component of $\Omega \setminus ( \Omega_- \cup \Omega_1)$ such that $\partial E \cap \partial \Omega = \emptyset$. Then, $(\Omega_1 \cup E, \Omega_-)$ and $(\Omega_1, \Omega_- \cup E)$ belong to $M$ and minimize $\mathcal S$.
\label{prop:nozero}
\end{prop}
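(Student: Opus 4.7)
The strategy is to compute the exact change in $\mathcal S$ caused by each of the two proposed perturbations, and then to use the minimality of $(\Omega_1,\Omega_-)$ in both directions simultaneously to force the two changes to vanish.

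First I would verify that both perturbed pairs lie in $M$. The inclusion $\Omega_s\subset E_1$ is trivially preserved (we either enlarge $\Omega_1$ or leave it alone), and the disjointness of open interiors $\bigring{(\Omega_1\cup E)}\cap\bigring{\Omega_-}=\emptyset$ (and the analogous statement in the other case) follows because $E$, as a connected component of $\Omega\setminus(\Omega_1\cup\Omega_-)$, is set-theoretically disjoint from $\Omega_-$, while $\bigring{\Omega_1}\cap\bigring{\Omega_-}$ is empty by hypothesis.

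The main step is a perimeter computation. Since $\partial E\cap\partial\Omega=\emptyset$ and $E$ is a component of $\Omega\setminus(\Omega_1\cup\Omega_-)$, the reduced boundary $\partial^* E$ decomposes (up to $\mathcal H^1$-null sets) as the essentially disjoint union $A\cup B$ with $A:=\partial^* E\cap\partial^*\Omega_1$ and $B:=\partial^* E\cap\partial^*\Omega_-$. The standard identity $\per(U\cup V)=\per(U)+\per(V)-2\mathcal H^1(\partial^*U\cap\partial^*V)$ for essentially disjoint finite-perimeter sets then yields
\begin{equation*}
\per(\Omega_1\cup E)=\per(\Omega_1)+\mathcal H^1(B)-\mathcal H^1(A),\quad \per(\Omega_-\cup E)=\per(\Omega_-)+\mathcal H^1(A)-\mathcal H^1(B).
\end{equation*}

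Setting $\Delta:=\mathcal H^1(A)-\mathcal H^1(B)$ and $\lambda:=\per(\Omega_-)/|\Omega_-|$ (with $|\Omega_-|>0$ guaranteed by Proposition \ref{prop:existS}), a direct substitution gives
\begin{equation*}
\mathcal S(\Omega_1\cup E,\Omega_-)-\mathcal S(\Omega_1,\Omega_-)=|E|\lambda-\Delta,\quad \mathcal S(\Omega_1,\Omega_-\cup E)-\mathcal S(\Omega_1,\Omega_-)=\tfrac{|\Omega_1|}{|\Omega_-|+|E|}(\Delta-|E|\lambda).
\end{equation*}
Both expressions must be nonnegative by the minimality of $(\Omega_1,\Omega_-)$, and since $|\Omega_1|>0$, this forces $\Delta=|E|\lambda$ and makes both differences vanish, so that the two perturbed pairs also attain the minimum of $\mathcal S$. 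I expect the main obstacle to be justifying the boundary splitting $\partial^* E=A\cup B$: this rests crucially on the hypothesis $\partial E\cap\partial\Omega=\emptyset$, so that no $\mathcal H^1$-mass of $\partial^* E$ is hidden on the outer boundary of $\Omega$, after which the argument reduces to the elementary algebraic manipulation above.
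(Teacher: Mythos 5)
Your proof is correct and follows essentially the same route as the paper's: the paper likewise tests the minimality of $(\Omega_1,\Omega_-)$ against the two competitors $(\Omega_1\cup E,\Omega_-)$ and $(\Omega_1,\Omega_-\cup E)$, and closes the resulting chain of inequalities via the identity $\per(\Omega_-\cup E)-\per(\Omega_-)=\per(\Omega_1)-\per(\Omega_1\cup E)$, which is exactly your observation that both sides equal $\Delta=\mathcal H^1(A)-\mathcal H^1(B)$ coming from $\partial^*E\subset\partial^*\Omega_1\cup\partial^*\Omega_-$. Your write-up merely makes the boundary bookkeeping and the resulting algebra more explicit than the paper does.
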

\begin{proof}
We abbreviate $\lambda = \frac{\per \Omega_-}{|\Omega_-|}.$ Then because
$E \cap \Omega_- = E \cap \Omega_1= \emptyset$, the pairs $(\Omega_1 \cup E, \Omega_-)$ and $(\Omega_1, \Omega_- \cup E)$ both belong to $M$ \rev{and} we have
$$\per(\Omega_1 \cup E) + \lambda |\Omega_1 \cup E| \gs \per(\Omega_1)+\lambda|\Omega_1|,$$
which implies because $E \cap \Omega_1= \emptyset$
\begin{equation}
 \label{eq:e1}
 \lambda |E| \gs \per(\Omega_1)-\per(\Omega_1 \cup E).
\end{equation}
Because $\Omega_-$ is a Cheeger set of $\Omega \setminus \Omega_1$, we have
$$\frac{\per(\Omega_- \cup E)}{|\Omega_- \cup E|} \gs \frac{\per(\Omega_-)}{|\Omega_-|}$$
which, because $E \cap \Omega_- = \emptyset$, implies
$$ \per(\Omega_- \cup E) |\Omega_-| \gs \per(\Omega_-) (|\Omega_-|+|E|)\,,$$
which \rev{yields}
\begin{equation}
\label{eq:e2}
 \per(\Omega_- \cup E) - \per(\Omega_-) \gs \lambda |E|.
\end{equation}
In summary, we have shown in \eqref{eq:e1} and \eqref{eq:e2} that
$$\per(\Omega_- \cup E) - \per(\Omega_-) \gs \lambda |E| \gs \per(\Omega_1)-\per(\Omega_1 \cup E).$$
Since $\partial E \cap \partial \Omega = \emptyset$ and $E \cap \Omega_- = E \cap \Omega_1= \emptyset$, we know $\partial E \subset \partial \Omega_1 \cup \partial \Omega_-$. Furthermore, \rev{$E \cap \Omega_- = E \cap \Omega_1= \emptyset$ also implies that the common boundaries between $E$ and $\Omega_-$, and between $E$ and $\Omega_1$ have opposite-pointing outer normals} and
one can write \cite[Thm. 16.3]{Mag12}
$$\per(\Omega_- \cup E) - \per(\Omega_-) =  \per(\Omega_1)-\per(\Omega_1 \cup E)$$ which implies that all the inequalities above are equalities, and the set $E$ can be joined to $\Omega_-$ or $\Omega_1$ without changing the value of $\mathcal S$.
\end{proof}

In the following we show that one may obtain minimizers of $\mathcal S$ (and therefore minimizers of $TV$ in $\bvdo$ with three values) in two simpler steps:
\begin{enumerate}
 \item Solve the Cheeger problem for $\Omega \setminus \Omega_s$. Let $\Omega_c$ be the maximal Cheeger set and $\lambda_c := \frac{\per \Omega_c}{|\Omega_c|}$ its Cheeger constant.
 \item Obtain the minimal (with respect to $\subset$) minimizer $\Omega_{1c}$ of
$$ \per(E) + \lambda_c |E| \text{ over } \rev{E \supset \Omega_s}.$$
\end{enumerate}
Note that minimizers of the second problem exist by an argument similar to Proposition \ref{prop:existS}.
\begin{thm}
The pair $(\Omega_{1c},\Omega_c)$ minimizes $\mathcal S$.
\label{thm:consecutive}
\end{thm}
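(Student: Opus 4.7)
The plan is to compare $(\Omega_{1c},\Omega_c)$ against an arbitrary minimizer $(\Omega_1^*,\Omega_-^*)$ of $\mathcal{S}$ in $M$, which exists by Proposition \ref{prop:existS}. I would first set up the natural chain of inequalities. Since $(\Omega_1^*,\Omega_-^*) \in M$ and $\Omega_s \subset \Omega_1^*$, the set $\Omega_-^*$ lies up to a null set in $\Omega \setminus \Omega_s$ and is thus an admissible competitor in the Cheeger problem defining $\lambda_c$; hence $\per(\Omega_-^*)/|\Omega_-^*| \gs \lambda_c$, giving
$$\mathcal{S}(\Omega_1^*,\Omega_-^*) = \per(\Omega_1^*) + \frac{\per(\Omega_-^*)}{|\Omega_-^*|}\,|\Omega_1^*| \gs \per(\Omega_1^*) + \lambda_c|\Omega_1^*|.$$
Since $\Omega_1^* \supset \Omega_s$, the defining minimality of $\Omega_{1c}$ for $E \mapsto \per(E)+\lambda_c|E|$ then yields $\per(\Omega_1^*) + \lambda_c|\Omega_1^*| \gs \per(\Omega_{1c}) + \lambda_c|\Omega_{1c}|$, and using $\per(\Omega_c) = \lambda_c|\Omega_c|$ this last quantity equals $\mathcal{S}(\Omega_{1c},\Omega_c)$ \emph{provided} $(\Omega_{1c},\Omega_c) \in M$.

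The nontrivial step, which I expect to be the main obstacle, is therefore showing admissibility: $|\Omega_{1c} \cap \Omega_c|=0$. I would argue by contradiction. Setting $D := \Omega_{1c} \cap \Omega_c$ and assuming $|D|>0$, three inequalities collide. First, $\Omega_{1c} \setminus D$ still contains $\Omega_s$ (because $\Omega_s \cap \Omega_c = \emptyset$), so the minimality of $\Omega_{1c}$ forces
$$\per(\Omega_{1c} \setminus D) - \per(\Omega_{1c}) \gs \lambda_c |D|.$$
Second, $\Omega_c \setminus D \subset \Omega \setminus \Omega_s$, so the Cheeger property of $\Omega_c$ gives $\per(\Omega_c \setminus D) \gs \per(\Omega_c) - \lambda_c |D|$. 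Third, applying submodularity of the perimeter to $\Omega_{1c}$ and $\R^2 \setminus \Omega_c$ produces
$$\per(\Omega_{1c} \setminus D) + \per(\Omega_c \setminus D) \ls \per(\Omega_{1c}) + \per(\Omega_c).$$
Summing the first two inequalities and comparing them with the third forces equality in all of them. But equality in the first means $\Omega_{1c} \setminus D$ is itself a minimizer of $\per+\lambda_c|\cdot|$ over sets containing $\Omega_s$, strictly smaller than $\Omega_{1c}$, contradicting the choice of $\Omega_{1c}$ as the minimal such minimizer.

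Beyond this admissibility argument, the proof is purely variational: it pits the minimality defining $\Omega_{1c}$ against the Cheeger optimality of $\Omega_c$ through a common subtraction, with no need for curvature information, boundary regularity, or the finer geometric structure exploited in Propositions \ref{prop:sameradius1} and \ref{prop:nozero}. The only technical care required lies in handling everything up to Lebesgue-null sets, which is routine in the $BV$ framework already set up.
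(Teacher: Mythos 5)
Your argument is correct, but it takes a genuinely different and more economical route than the paper's proof. The paper starts from a structured minimizer $(\Omega_1,\Omega_-)$ of $\mathcal S$ obtained via Theorem \ref{thm:positive}, sets $\lambda=\per(\Omega_-)/|\Omega_-|\gs\lambda_c$, proves two separate disjointness lemmas ($E\cap\Omega_-=\emptyset$ and $E_c\cap\Omega_c=\emptyset$ for the minimal minimizers of $\per+\lambda|\cdot|$ and $\per+\lambda_c|\cdot|$ over sets containing $\Omega_s$), and then plays $\Omega_1$ against $\Omega_{1c}$ through the submodularity $\per(\Omega_1\cap\Omega_{1c})+\per(\Omega_1\cup\Omega_{1c})\ls\per(\Omega_1)+\per(\Omega_{1c})$, with a case distinction $\lambda_c<\lambda$ versus $\lambda_c=\lambda$, to conclude $\Omega_1\subset\Omega_{1c}$ and that $\Omega_c$ is a Cheeger set of $\Omega\setminus\Omega_1$, whence the two pairs have equal energy. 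You instead observe that for \emph{every} admissible pair $(E_1,E_-)\in M$ one has $\mathcal S(E_1,E_-)\gs\per(E_1)+\lambda_c|E_1|\gs\per(\Omega_{1c})+\lambda_c|\Omega_{1c}|$, and that the right-hand side equals $\mathcal S(\Omega_{1c},\Omega_c)$ once admissibility of that pair is known; so only one disjointness lemma, $|\Omega_{1c}\cap\Omega_c|=0$, is required. Your proof of that lemma (minimality of $\Omega_{1c}$ applied to $\Omega_{1c}\setminus D$, the Cheeger inequality applied to $\Omega_c\setminus D$, and $\per(A\setminus B)+\per(B\setminus A)\ls\per(A)+\per(B)$, with equality forcing a strictly smaller minimizer) is exactly the mechanism the paper uses for its statement $E_c\cap\Omega_c=\emptyset$. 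Your route is shorter, never invokes the existence or three-valued structure of a minimizer of $\mathcal S$ (the lower bound holds against arbitrary competitors, so even Proposition \ref{prop:existS} is dispensable), and isolates the single nontrivial point; the paper's longer route additionally yields $\Omega_1\subset\Omega_{1c}$ and $\lambda=\lambda_c$, facts that connect the consecutive construction to generic minimizers and feed the subsequent uniqueness remark. The caveats you flag --- that $E_-\subset\Omega\setminus\Omega_s$ and membership in $M$ are meant up to Lebesgue-null sets --- are normalization issues already present in the paper's definition of $M$ and are treated there at the same level of rigor.
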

\begin{proof}
Let $\lambda := \frac{\per \Omega_-}{|\Omega_-|}$ (by definition of the Cheeger set $\Omega_c$,
we have $\lambda \gs \lambda_c$).
Let also $E$ be the smallest (with respect to $\subset$) minimizer of
\begin{equation} \hat E \mapsto \per(\hat E) + \lambda |\hat E| \text{ subject to } \Omega_s \subset \hat E. \label{eq:pbE}\end{equation}

We want to show that $E \cap \Omega_- = \emptyset$, that is $E$ is also a minimizer of $\per(\cdot) + \lambda |\cdot|$ with respect to the constraints $E \cap \Omega_- = \emptyset$ and $\Omega_s \subset E$.

Because $E \setminus \Omega_-$ is admissible in \eqref{eq:pbE},
$$\per(E\setminus \Omega_-) + \lambda |E \setminus \Omega_-| \gs \per(E) + \lambda |E|.$$
On the other hand, $\Omega_-$, as a Cheeger set of $\Omega \setminus \Omega_1$, is a minimizer of
\begin{equation} \hat{E} \rev{\mapsto} \per(\hat{E}) - \lambda |\hat{E}| \text{ subject to } \hat{E} \cap \Omega_1 = \emptyset. \label{eq:pbOm} \end{equation}
Then $\Omega_- \setminus E$ is a competitor for \eqref{eq:pbOm},
$$ \per(\Omega_- \setminus E) - \lambda |\Omega_- \setminus E| \gs \per(\Omega_-) - \lambda |\Omega_-|. $$
Summing these two inequalities and using that (see \cite[Exercise 16.5]{Mag12})
$$\per(E\setminus \Omega_-) + \per(\Omega_- \setminus E) \ls \per(E) + \per(\Omega_-),$$
we obtain
$$ \lambda \left( |E \setminus \Omega_-| - |\Omega_- \setminus E| \right) \gs  \lambda \left( |E| - |\Omega_-| \right).$$
Since this last inequality is an equality, it is also true for the two previous ones, and we can conclude that
$$\per(E\setminus \Omega_-) + \lambda |E \setminus \Omega_-| = \per(E) + \lambda |E|$$
which implies, since $E$ is minimal with respect to the inclusion, that $E \cap \Omega_- = \emptyset$.

Similarly, if $E_c$ is a minimizer of
\begin{equation}\hat E \mapsto \per \hat E + \lambda_c |\hat E| \text{ with constraint } \rev{\Omega_s} \subset \rev{\hat E},\label{eq:pbEc}\end{equation}
one can prove that $E_c \cap \Omega_c = \emptyset$.

We have proved that $\Omega_1,\Omega_{1c}$ minimize $\per (\cdot) + \lambda\abs{\cdot}, \ \per(\cdot)+\lambda_c \abs{\cdot}$ with the same constraint (containing $\Omega_s$). Hence, $\Omega_1 \cap \Omega_{1c}$ is admissible in \eqref{eq:pbE} and $\Omega_1 \cup \Omega_{1c}$ is admissible for \eqref{eq:pbEc}, which implies
$$ \per(\Omega_1 \cap \Omega_{1c})+ \lambda |\Omega_1 \cap \Omega_{1c}| \gs \per \Omega_1 + \lambda |\Omega_1|,$$
$$ \per(\Omega_1 \cup \Omega_{1c}) + \lambda_c |\Omega_1 \cup \Omega_{1c}| \gs \per \Omega_{1c} + \lambda_c |\Omega_{1c}|.$$
Summing these inequalities and recalling that \cite[Lem. 12.22]{Mag12} 
$$\rev{\per(\Omega_1 \cap \Omega_{1c}) + \per(\Omega_1 \cup  \Omega_{1c}) \ls \per(\Omega_1) + \per(\Omega_{1c}),}$$
\rev{we get}
$$\lambda_c |\Omega_1 \setminus \Omega_{1c} | \gs \lambda |\Omega_1 \setminus \Omega_{1c} |.$$
Then, if $\lambda_c < \lambda$ we obtain $\Omega_{1c} \supset \Omega_1$ and if $\lambda = \lambda_c$, all the inequalities above are equalities, which implies once again (using the minimality of $\Omega_1$) that $\Omega_{1c} \supset \Omega_1.$
Then, $\Omega_c \cap \Omega_1 = \emptyset$ hence $\Omega_c$ is also a Cheeger set of $\Omega \setminus \Omega_1.$
\end{proof}

\begin{rem}
By the statements in the previous section about level sets of the generic minimizer $u$, we infer that the only lack of uniqueness present in the minimization of $TV$ in $\bvdo$ is that of the corresponding geometric problems. More precisely, if the Cheeger set of $\Omega \setminus \Omega_s$ is unique, (which is shown in \cite[Thm. 1]{CasChaNov10} to be a generic situation), then the minimizer of $TV$ in $\bvdo$ is unique as well. Indeed, with the same arguments as in the proof of Proposition \ref{prop:nozero}, one sees that the minimizer of \eqref{eq:omega1} is also unique, which implies by Proposition \ref{prop:cheeger1} and Theorem \ref{thm:positive} that the level-sets of $u$ are all uniquely determined.
\end{rem}

\subsection{Behavior of \texorpdfstring{$Y_c$}{the yield number} as \texorpdfstring{$\Omega$}{the domain} grows large}
\begin{prop}\label{prop:growingdomain}
Let $\Omega_0$ be a convex set and $\Omega_s \subset \Omega_0$, both \rev{containing} the origin, and assume that $|\Omega_s| = 1$. For $\alpha \gs 1$, let $\Omega = \alpha \Omega_0$, i.e.~we consider the domain to be a rescaling of $\Omega_0$ (note that $\Omega_s \subset \alpha \Omega_0$). Then 
$$\lim_{\alpha \to \infty} Y_c(\alpha) = \frac{1}{\min\limits_{E \supset \Omega_s}{\per E}}.$$
\end{prop}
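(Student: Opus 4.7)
The plan is to combine the characterization $Y_c(\alpha) = |\Omega_s|/\min_{\bvdo}TV$ (which follows from Assumption~\ref{ass:linkedparticles} applied to \eqref{eq:eigenvalue}, together with the $BV$-relaxation discussed in Section~\ref{se:pwcm}, upon rescaling the test function so that $v\equiv 1$ on $\Omega_s$) with Theorem~\ref{thm:consecutive}. The latter tells us that
$$\min_{\bvdo} TV \;=\; \mathcal{S}(\Omega_{1c},\Omega_c) \;=\; \per \Omega_{1c} + \lambda_c \,|\Omega_{1c}|,$$
where $\lambda_c = \lambda_c(\alpha)$ is the Cheeger constant of $\alpha\Omega_0 \setminus \Omega_s$ and $\Omega_{1c}$ is the smallest minimizer of $E \mapsto \per E + \lambda_c|E|$ subject to $\Omega_s \subset E \subset \alpha\Omega_0$. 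Since $|\Omega_s|=1$, the claim reduces to showing $\mathcal{S}(\Omega_{1c},\Omega_c) \to M := \min_{E \supset \Omega_s} \per E$ as $\alpha\to\infty$.

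The key geometric input is that $\lambda_c(\alpha)\to 0$. Since $\Omega_0$ is convex with nonempty interior it contains some open ball $B_{\rho_0}(x_0)$, hence $\alpha\Omega_0 \supset B_{\alpha\rho_0}(\alpha x_0)$, so $\alpha\Omega_0$ contains balls of arbitrarily large radius. Because $\Omega_s$ is bounded and $\alpha$-independent, for $\alpha$ large enough one can translate inside this dilated ball to produce a sub-ball $B_{r(\alpha)}\subset \alpha\Omega_0 \setminus \Omega_s$ with $r(\alpha) \to \infty$; testing the Cheeger ratio against such a sub-ball yields $\lambda_c(\alpha) \ls 2/r(\alpha) \to 0$.

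The two comparisons then close the argument. For the upper bound, let $E^*$ be a bounded minimizer of $\per$ over sets containing $\Omega_s$ (the convex hull of $\Omega_s$ works, since for bounded $E$ with finite perimeter one has $\per(\mathrm{conv}\,E) \ls \per E$, and convex sets are minimal in inclusion). For $\alpha$ large $E^* \subset \alpha\Omega_0$, so $E^*$ becomes admissible in the problem defining $\Omega_{1c}$, giving
$$\mathcal{S}(\Omega_{1c},\Omega_c) \;\ls\; \per E^* + \lambda_c(\alpha)\,|E^*| \;=\; M + \lambda_c(\alpha)\,|E^*| \;\longrightarrow\; M.$$
For the lower bound, $\Omega_{1c}(\alpha)\supset\Omega_s$ is an admissible competitor in the definition of $M$, so $\per \Omega_{1c}(\alpha) \gs M$ and hence $\mathcal{S}(\Omega_{1c},\Omega_c) \gs M$. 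Taking reciprocals gives $Y_c(\alpha)\to 1/M$.

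The main obstacle is the geometric argument that $\lambda_c(\alpha)\to 0$; once this decay is in hand the rest is a routine consequence of the reformulation established by Theorem~\ref{thm:consecutive}, which turns the behaviour of $Y_c$ into the behaviour of two nested shape-optimization problems whose penalties vanish.
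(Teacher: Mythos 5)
Your proposal is correct, and the overall strategy is the same squeeze as in the paper: bound $\inf\mathcal S$ from below by $M:=\min_{E\supset\Omega_s}\per E$ (using $\mathcal S(E_1,E_-)\gs\per E_1\gs M$ whenever $\Omega_s\subset E_1$) and from above by $M+o(1)$ as $\alpha\to\infty$, then take reciprocals. Where you diverge is the upper bound: the paper never invokes Theorem \ref{thm:consecutive} nor the decay of the Cheeger constant. It simply tests $\mathcal S$ on the explicit admissible pair $(\tilde\Omega,\alpha\Omega_0\setminus\tilde\Omega)$ for a fixed $\tilde\Omega\supset\Omega_s$ and computes that the second term $\frac{|\tilde\Omega|}{|\alpha\Omega_0|-|\tilde\Omega|}\bigl(\per(\alpha\Omega_0)+\per\tilde\Omega\bigr)=O(1/\alpha)$, then optimizes over $\tilde\Omega$. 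Your route instead passes through the structure theorem ($\inf\mathcal S=\per\Omega_{1c}+\lambda_c|\Omega_{1c}|$) and a ball-rolling argument giving $\lambda_c(\alpha)\ls 2/r(\alpha)\to 0$; this is heavier machinery but yields the same $O(1/\alpha)$ rate noted in Remark \ref{rem:nocriticalsize}, and both competitors ultimately exploit that the negative set can be made huge while the positive set stays fixed.

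One small imprecision: your parenthetical claim that $\per(\mathrm{conv}\,E)\ls\per E$ for all bounded sets of finite perimeter is false when $E$ is disconnected (two distant disks have smaller total perimeter than their convex hull); this is why the paper's Remark \ref{rem:moreparticles} assumes $\Omega_s$ indecomposable before identifying the minimizer with $\per(\operatorname{Co}(\Omega_s))$. The flaw is harmless here: you only need, for each $\eps>0$, a \emph{bounded} set $E^*\supset\Omega_s$ with $\per E^*\ls M+\eps$, which follows by intersecting any near-minimizer with a large ball containing $\Omega_s$ (intersection with a convex set does not increase perimeter), and such an $E^*$ is admissible once $\alpha$ is large. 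With that repair your argument is complete.
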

\begin{proof}
We recall that
$$Y_c(\alpha) = \frac{|\Omega_s|}{\inf_{M_\alpha} \mathcal S},$$
where
$$M_\alpha := \left \{ (E_1,E_-) \subset \alpha\Omega_0 \ \vert \ \bigring{E_1} \cap \bigring{E_-} = \emptyset, \ \Omega_s \subset E_1 \right\}.$$
Then, noticing that for every $\tilde{\Omega}$ such that $\Omega_s \subset \tilde{\Omega} \subset \alpha\Omega_0$ we have $(\tilde{\Omega}, \alpha \Omega_0 \setminus \tilde{\Omega}) \in M_\alpha$, one can write
\begin{align*}\inf_{M_\alpha} \mathcal S &\ls \mathcal S\left(\tilde{\Omega}, \alpha \Omega_0 \setminus \tilde{\Omega}\right)=\per(\tilde{\Omega}) + \frac{\per(\alpha \Omega_0)+\per(\tilde{\Omega})}{|\alpha \Omega_0| - \rev{|\tilde \Omega|}}|\tilde{\Omega}| \\
 & \ls \per(\tilde{\Omega}) + \frac{\alpha\per( \Omega_0)+\per(\tilde{\Omega})}{\alpha^2|\Omega_0| - \rev{|\tilde \Omega|}}|\tilde{\Omega}| \xrightarrow[\alpha \to \infty]{}\per(\tilde{\Omega}).
\end{align*}
On the other hand, \rev{since $(\tilde{\Omega}, \alpha \Omega_0 \setminus \tilde{\Omega}) \in M_\alpha$},
$$\mathcal S(\tilde{\Omega}, \alpha \Omega_0 \setminus \tilde{\Omega}) \gs \per(\tilde{\Omega}).$$
Optimizing in $\tilde{\Omega}$ establishes the result.
\end{proof}

\begin{rem}
\label{rem:moreparticles}
If $\Omega_s$ is indecomposable (i.e., `connected' in an adequate sense for this framework), we have by \cite[Prop. 5]{FerFus09} that $$\min\limits_{E \supset \Omega_s}{\per E} = \per(\operatorname{Co}(\Omega_s)),$$ where $\operatorname{Co}(X)$ is the convex envelope of $X$.
\end{rem}

\begin{rem}
\label{rem:nocriticalsize}
As may be seen in examples \ref{ex:cylinders} and \ref{ex:SquareSquare}, the above limit is not attained at a finite $\alpha$. There is no `critical size' at which the boundary of $\Omega$ stops playing a role. We see that the limiting $Y_c$ is approached at least as $O(1/\alpha)$ as $\alpha \to \infty$.
\end{rem}


\section{Application examples}\label{se:examples}
In the previous section, we have seen that 
the free boundaries of the optimal sets are composed of pieces of circles of the same radius, which suggests that one might be able to use morphological operations to construct these minimizers. We introduce these now.

\begin{definition}[Opening, Closing]
For a set $X$ and $r>0$,
We define the \textbf{opening} of $X$ with radius $r$ by
\begin{equation*}
 \open{r}{X}:=\bigcup_{x: B_{r}(x) \subset X} B_{r}(x)\;,
\end{equation*}
where $B_r(x)$ is the disk with radius $r$ and center $x$.
Additionally we define the \textbf{closing} of $X$ with radius $r$ as
$$\close{{r}}{X}:= \R^2 \setminus \lr{\open{{r}}{\R^2 \setminus X}}.$$
\end{definition}

\subsection{Morphological operations and Cheeger sets}
The Cheeger problem is far from being entirely understood. Nonetheless, it is for convex sets. As a result, if $\Omega$ is convex and $\Omega_s = \emptyset$, the Cheeger set $\Omega_-$ of $\Omega$ satisfies
\begin{itemize}
 \item $\Omega_-$ is unique,
 \item $\Omega_-$ is convex and $\mathcal C^{1,1}$,
 \item $\Omega_- = \open{r}{\Omega}$ where $r$ is the Cheeger constant of $\Omega$.
\end{itemize}
In the general case, for a Cheeger set $\Omega_-$ of $\Omega \setminus \Omega_s$, few results are available \cite{LeoPra16}
\begin{itemize}
 \item The boundaries of $\Omega_-$ are pieces of circles of radius $\frac 1 \lambda$ ($\lambda$ is the Cheeger constant of $\Omega \setminus \Omega_s$) which are shorter than half the corresponding circle.
 \item If $x_0$ is a smooth point of $\partial (\Omega \setminus \Omega_s)$ and belongs to $\partial \Omega_-$, then $\partial \Omega_-$ is $\mathcal C^{1,1}$ around $x_0$ \cite[Th.~2]{CasChaNov10}.
 \item We also have \cite[Lem. 2.14]{LeoPra16}, which basically tells that if the maximal Cheeger set of $\Omega \setminus \Omega_s$ contains a ball of radius $\frac 1\lambda$, then it also contains all the balls of radius $\frac 1\lambda$ obtained by rolling the first ball inside $\Omega \setminus \Omega_s.$ \\
\end{itemize}

\begin{rem}
\label{rem:distanceboundary}
Let $\Omega$ and $\Omega_s$ be convex and let $\lambda$ be the Cheeger constant of $\Omega.$ If $d(\Omega_s,\partial \Omega) \gs \frac{2}{\lambda}$, then the maximal Cheeger set of $\Omega \setminus \Omega_s$ can be obtained rolling a ball of radius $\frac{1}{\lambda_0} < \frac{1}{\lambda}$ around $\Omega_s$ ($\lambda_0 \gs \lambda$ being the Cheeger constant of $\Omega \setminus \Omega_s$). In particular, it fills a neighborhood of $\partial \Omega_s$ in $\Omega \setminus \Omega_s$.
\end{rem}

\subsection{Single convex particles}
We start with two simple examples in which a single convex particle is placed centrally within a larger convex domain.

\begin{example}\label{ex:cylinders}
[Circular $\Omega$] \\
\medskip
\noindent
a) Let $\Omega_s,\Omega$ be two circles with radii $\frac{1}{\sqrt{\pi}}$, $R$, ensuring that $|\Omega_s| = 1$.
Since in this case $\open{r}{\Omega} = \Omega$ for all $r \ls R$,
$\Omega_- = \Omega\setminus \Omega_s$ minimizes $S(\Omega_s, \cdot)$.

Thus, $\Omega_c = \Omega\setminus \Omega_s$ and $\Omega_{1c} = \Omega_s$. We have \[ \lambda_c = \frac{\per \Omega_c}{|\Omega_c|} = \frac{2\pi R + 2 \sqrt{\pi}}{\pi R^2 - 1}, \]
and
\[ Y_c = \frac{\rev{|\Omega_{1c}|}}{Per(\Omega_{1c}) + \lambda_c |\Omega_{1c}|} = \frac{1}{2 \sqrt{\pi} + \frac{2\pi R + 2 \sqrt{\pi}}{\pi R^2 - 1}}  . \]

We may also construct the minimizer of $\text{TV}$ over $\bvdo$, given (in cylindrical coordinates) by $v_0: [0,\infty] \times [0,\pi]\rightarrow \R:$
\begin{equation*}
v_0(r,\phi):=
\begin{cases}
\abs{\Omega_s} = 1 & \text{ for }\; 0\ls r \ls \frac{1}{\sqrt{\pi}} \,,\\
-\frac{\abs{\Omega_s}}{\abs{\Omega}-\abs{\Omega_s}}
= -  \frac{1}{R^2 \pi - 1}
&
\text{ for }\; \frac{1}{\sqrt{\pi}} < r \ls R\,,\\
0 & \text{ for }\; R < r < \infty\,
\end{cases}
\end{equation*}
(evidently axisymmetric). The total variation is:
\begin{align*}
\abs{D v_0}(\Omega) &=
\per{\Omega_s}  +
\lr{\per{\Omega_s}+\per{\Omega} }
\frac{\abs{\Omega_s}}{\abs{\Omega}-\abs{\Omega_s}}  \\
&=
2\sqrt{\pi} +
\frac{2\sqrt{\pi} + 2R\pi}{R^2 \pi - 1}  = \frac{1}{Y_c}\;.
\end{align*}
For $R \to \infty$ the limit is $\per{\Omega_s} = 2 \sqrt{\pi}$
and $Y_c$ approaches $\frac{1}{2\sqrt{\pi}}$.

\medskip
\noindent
b) As a slight variation on the above now let $\Omega_s$ be the unit square. Again we find  $\Omega_c = \Omega\setminus \Omega_s$ and $\Omega_{1c} = \Omega_s$, and hence
\[ \lambda_c = \frac{\per \Omega_c}{|\Omega_c|} = \frac{2\pi R + 4}{\pi R^2 - 1}, \]
and
\[ Y_c = \frac{1}{Per(\Omega_{1c}) + \lambda_c |\Omega_{1c}|} = \frac{1}{4 + \frac{2\pi R + 4}{\pi R^2 - 1}}  \to 0.25 \mbox{ as } R \to \infty. \]
\end{example}

\begin{example}\label{ex:SquareSquare}
[Square $\Omega$] \\
We now consider $\Omega$ to be a square of side $L$.
In the absence of $\Omega_s$ the optimal set $\Omega_-$ is given by $\open{r_{\infty}}{\Omega}$ for $r_{\infty} = L/(2+\sqrt{\pi}) = 1/\lambda_c$; see \cite{MosMia65}.

\medskip
\noindent
a) Now consider a centrally positioned unit square $\Omega_s$, within $\Omega$ of side $L > 1$. The optimal set $\Omega_-$ is given by $\open{r}{\Omega}\setminus \Omega_s$ for some $r>0$. We have
$\abs{\open{r}{\Omega}} = \abs{\Omega} +r^2 \lr{\pi -  4}$,
$\per{\open{r}{\Omega}} = \per{\Omega}+ r \lr{ 2\pi - 8}$, and to find $r = r(L)$ we use Propositions \ref{prop:sameradius1} and \ref{prop:sameradius2}:
\begin{equation*}
\frac{1}{r} = \frac{\per(\open{r}{\Omega}\setminus \Omega_s)}{\abs{\open{r}{\Omega}\setminus \Omega_s}}
 = \frac{4L+4+2r\lr{\pi-4}} {L^2-1+r^2\lr{\pi-4}}.
\end{equation*}
The resulting quadratic equation gives the optimal $r(L)$:
\[ r(L) = \frac{L}{2}\frac{1+1/L}{1-\pi/4}\left( 1 - \sqrt{1- (1-\pi/4)\frac{1-1/L}{(1+1/L)} }\right)  . \]
We find that $r(L) < r_{\infty}$ with $r(L) \to  r_{\infty}$ as $L \to \infty$ and $r(L) \to  0$ as $L \to 1^+$, as expected. Consequently, $\Omega_{c} = \open{r(L)}{\Omega}\setminus \Omega_s$ and
the Cheeger constant $\lambda_c(L)$ is:
\[ \lambda_c(L) = \frac{\per(\open{r(L)}{\Omega}\setminus \Omega_s)}{\abs{\open{r(L)}{\Omega}\setminus \Omega_s}}
 = \frac{4L+4+2r(L)\lr{\pi-4}} {L^2-1+r(L)^2\lr{\pi-4}}. \]
Again we have $\Omega_{1c} = \Omega_s$, and
\[ Y_c(L) = \frac{1}{Per(\Omega_{1c}) + \lambda_c(L) |\Omega_{1c}|} =
\frac{1}{4 + \lambda_c(L)}  .\]

The minimizer of $\text{TV}$ over $\bvdo$ is constructed from the optimal sets:
$$u_{r(L)}:=
1_{\Omega_s}-\frac{\abs{\Omega_s}}{\abs{\open{r(L)}{\Omega}}-\abs{\Omega_s}}
1_{\open{r(L)}{\Omega} \setminus \Omega_s} \;$$
with total variation:
\begin{align*}
\abs{D u_{r(L)}}(\Omega) &= \per{\Omega_s}  +
\frac{\lr{\per{\Omega_s} + \per{\Omega}+ r(L) \lr{ 2\pi - 8}} \abs{\Omega_s}}{
\abs{\Omega} +r(L)^2 \lr{\pi -  4}    - \abs{\Omega_s}}\\ &=
4  +
\frac{\lr{4 + 4L+ r(L) \lr{ 2\pi - 8}} }{L^2 +r(L)^2 \lr{\pi -  4}    - 1}
\end{align*}

\medskip
\noindent
b) We replace $\Omega_s$ by circle of radius $1/\sqrt{\pi}$, ensuring $\abs{\Omega_s} = 1$, and consider $L>2/\sqrt{\pi}$. The calculations are similar. Again the optimal set $\Omega_-$ is $\open{r}{\Omega}\setminus \Omega_s$ with $r=r(L)$ determined from Propositions \ref{prop:sameradius1} and \ref{prop:sameradius2}. We now find:
\[ r(L) = \frac{L}{2}\frac{1+\sqrt(\pi)/(2L)}{1-\pi/4}\left( 1 - \sqrt{1- (1-\pi/4)\frac{1-1/L^2}{(1+\sqrt(\pi)/(2L))^2} }\right)  . \]
Thus, $\Omega_{c} = \open{r(L)}{\Omega}\setminus \Omega_s$, $\Omega_{1c} = \Omega_s$, and
\[ \lambda_c(L) = \frac{\per(\open{r(L)}{\Omega}\setminus \Omega_s)}{\abs{\open{r(L)}{\Omega}\setminus \Omega_s}}
 = \frac{4L+2\sqrt{\pi} +2r(L)\lr{\pi-4}} {L^2-1+r(L)^2\lr{\pi-4}}. \]
\[ Y_c(L) = \frac{1}{Per(\Omega_{1c}) + \lambda_c(L) |\Omega_{1c}|} =
\frac{1}{2\sqrt{\pi} + \lambda_c(L)}  .\]
\end{example}

Figure \ref{fig:squarepeg1}a plots the results of example \ref{ex:SquareSquare} at different $L$. Interestingly, although $\lambda_c(L)$ is smaller for the circular $\Omega_s$, it is only very marginally so. Figure \ref{fig:squarepeg1}b plots the yield limit $Y_c(L)$ for both $\Omega_s$. Here we see a significant difference: the circular $\Omega_s$ requires a larger yield stress to prevent motion. As we have seen that $\lambda_c(L)$ is similar for both $\Omega_s$, this difference in $Y_c$ stems almost entirely from $Per(\Omega_{1c}) = Per(\Omega_{s})$ (in these examples). We may deduce from the expressions derived that $\lambda_c(L) \sim O(1/L)$ as $L \to \infty$ and hence that $Y_c(L) \to 1/Per(\Omega_{s}) + O(1/L)$ as $L \to \infty$; see also Proposition \ref{prop:growingdomain}. The same behaviours are observed with the earlier example \ref{ex:cylinders}, in a circle of radius $R$, i.e.~little difference in $\lambda_c(R)$, significant difference in $Y_c(R)$, stemming primarily from $Per(\Omega_{s})$, and similar asymptotic trends as $R \to \infty$.

\begin{figure}
\begin{center}
\includegraphics[width = 5cm]{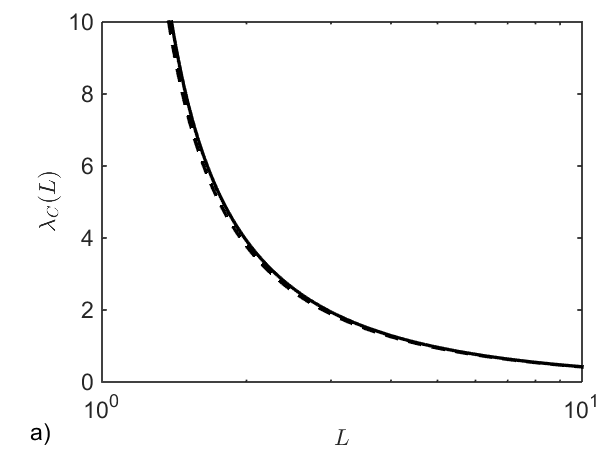}
\includegraphics[width = 5cm]{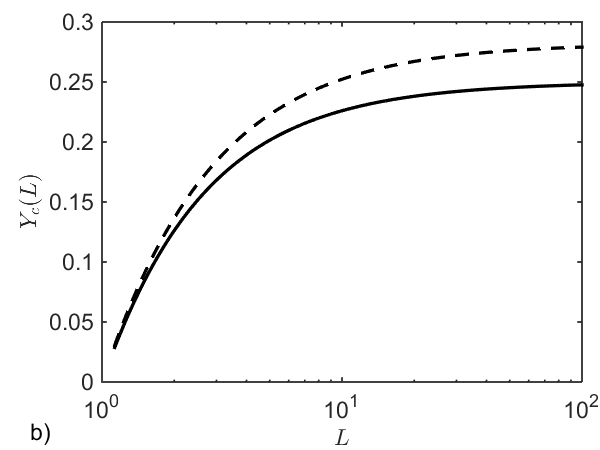}
\end{center}
\caption{Comparison of results of example \ref{ex:SquareSquare} at different $L$: a) $\lambda_c(L)$; b) $Y_c(L)$. Circular $\Omega_s$ is marked with the broken line and square $\Omega_s$ is marked with the solid line.}
\label{fig:squarepeg1}
\end{figure}

We might also seek to compare examples \ref{ex:cylinders} and \ref{ex:SquareSquare} directly. The scaling introduced ensures $\abs{\Omega_s} = 1$, matching the buoyancy force felt by each particle. By setting $L^2 = \pi R^2$ we also match the area of fluid within $\Omega \setminus \Omega_s$. Figure \ref{fig:squarepeg2}a plots $\lambda_c(R)$ and $\lambda_c(L(R))$. Figure \ref{fig:squarepeg2}b plots $Y_c(R)$ and $Y_c(L(R))$. We observe that $\lambda_c(R) < \lambda_c(L(R))$, for the same $\Omega_s$, but again the effect is marginal and $\lambda_c$ is very close for all 4 cases. Interestingly, in Figure \ref{fig:squarepeg2}b we see that by scaling $L^2 = \pi R^2$ the effects of the shape of $\Omega$ are minimized: $Y_c(R)$ and $Y_c(L(R))$ are very close for the same $\Omega_s$, whether it be circular or square.

\begin{figure}
\begin{center}
\includegraphics[width = 5cm]{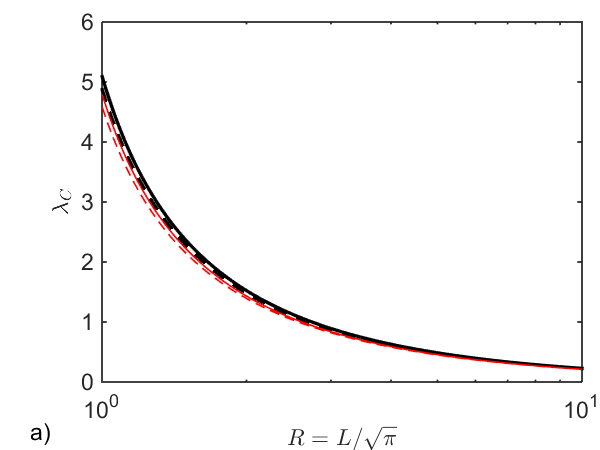}
\includegraphics[width = 5cm]{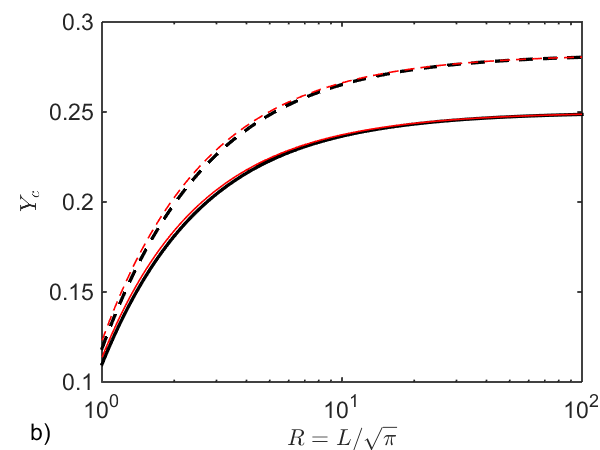}
\end{center}
\caption{Comparison of results of examples \ref{ex:cylinders} \& \ref{ex:SquareSquare} at different $R = L/\sqrt{\pi}$: a) $\lambda_c(L)$; b) $Y_c(L)$. Circular $\Omega_s$ is marked with the broken line and square $\Omega_s$ is marked with the solid line. Circular $\Omega$ marked in red and square $\Omega$ in black.}
\label{fig:squarepeg2}
\end{figure}

To summarise, these simple examples suggest that (for centrally placed convex) particles, when we have the same area of solid and the same area of fluid, the main differences in yield behaviour comes from the different perimeters of the particle. The optimal sets in $\Omega \setminus \Omega_s$ are selected such that $\lambda_c$ varies primarily with the area of $\Omega$ (and less significantly with its shape). For the same size of $\Omega$ (and $\Omega_s$) the particle with smaller perimeter has larger $Y_c$.
An illustration of the optimal sets for the square in square case is shown in Figure \ref{fig:SquareInSquare} (left) for $L=3.33$, for which we obtain $r = 0.600$ and $\abs{D u_r}(\Omega) = 5.67$.

\begin{example}[Influence of the aspect ratio and boundary] \label{ex:rectangle}

We revise example \ref{ex:SquareSquare}, keeping $\Omega$ as a square of side $L$ and replacing $\Omega_s$ by a centrally positioned rectangle of aspect ratio $\beta^2$, i.e.~the rectangle has height $\beta$ and width $\rev{1/}\beta \le L$. Provided that $\beta$ is sufficiently large there is a single Cheeger set in $\Omega\setminus \Omega_s$, given by
$\open{r}{\Omega}\setminus \Omega_s$ for some $r>0$. However, for sufficiently small $\beta$:
\[ \frac{1}{L}  \le \beta \le \frac{L}{2}\left( \sqrt{1 + \frac{8}{L^2} - 1} \right)   , \]
there may be a second Cheeger set configuration, as illustrated in Figure \ref{fig:rect_schematic}.

\begin{figure}
\begin{center}
\includegraphics[width = 8.5cm]{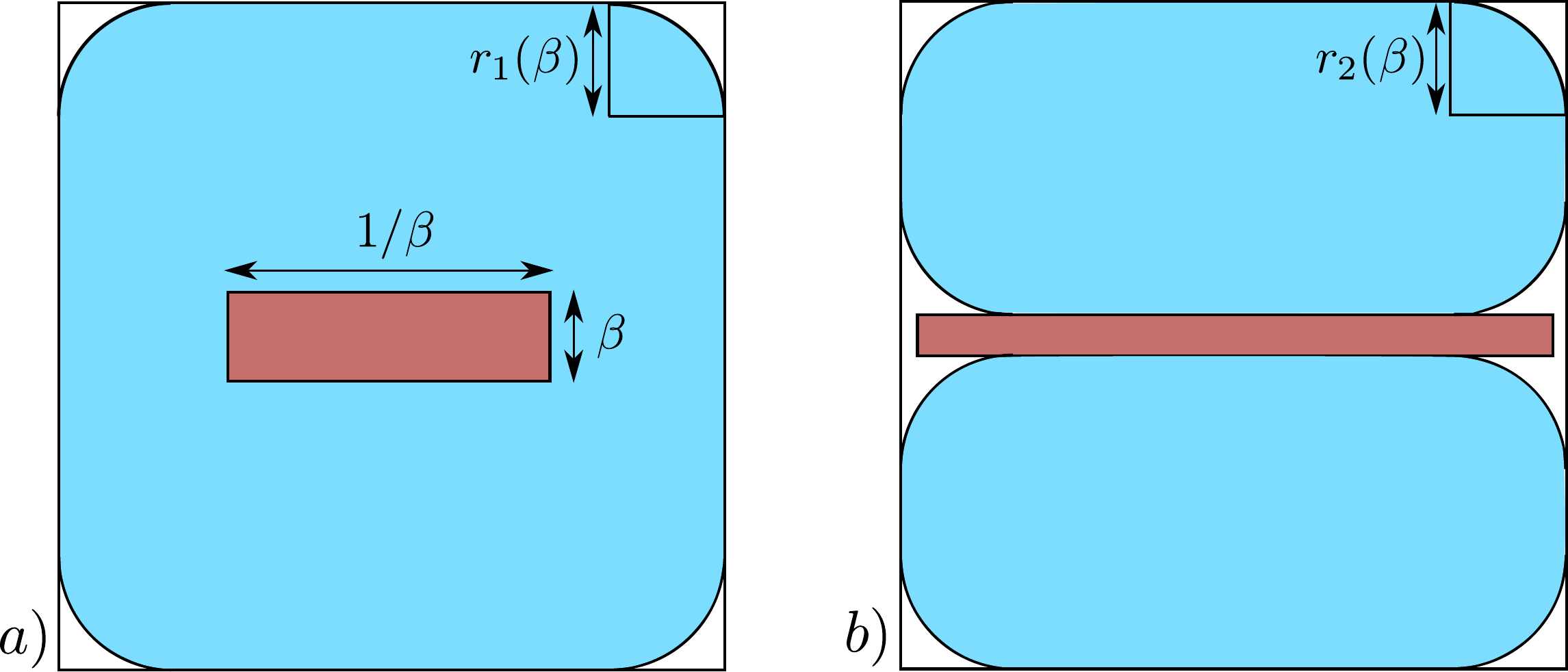}
\end{center}
\caption{Schematic of two different configurations for the rectangle with aspect ratio $\beta$: a) configuration 1; b) configuration 2.}
\label{fig:rect_schematic}
\end{figure}

For the first configuration we use Propositions \ref{prop:sameradius1} and \ref{prop:sameradius2} to find the radius $r_1(\beta) = 1/\lambda_{c,1}(\beta)$:
\[ r_1(\beta) = \frac{L}{2} \frac{1+\frac{\beta + 1/\beta}{2L}}{1-\pi/4}\left( 1 - \sqrt{1- (1-\pi/4)\frac{1-1/L^2}{(\frac{\beta + 1/\beta}{2L})^2} }\right)  . \]
The second configuration gives radius $r_2(\beta) = 1/\lambda_{c,2}(\beta)$:
\[ r_2(\beta) = \frac{3L-\beta}{8(1-\pi/4)} \left( 1 - \sqrt{1- 8(1-\pi/4)\frac{L(L-\beta)}{(3L-\beta)^2}} \right)  . \]

It is found that for a small band of $\beta$ the second configuration gives $\lambda_{c,2}(\beta)  < \lambda_{c,1}(\beta)$. In both cases we have $\Omega_{1c} = \Omega_s$ and the yield limit is
\[ Y_{c}(\beta) = \frac{1}{Per(\Omega_{1c}) + \min\{\lambda_{c,k}(\beta)\} |\Omega_{1c}|} =
\frac{1}{2(\beta + 1/\beta) + \min\{\lambda_{c,k}(\beta)\} }  .\]
The variation of $\lambda_{c}$ and $Y_c$ is illustrated in Figure \ref{fig:rectangle1} for $L=3$. Note that $Y_{c}(\beta)$ approaches the square in square results at $\beta = 1$. The difference between the two potential $Y_c$ in Figure \ref{fig:rectangle1}b is relatively small because for small $\beta$, $Per(\Omega_{s})$ becomes relatively large.

This example also serves to demonstrate geometric non-uniqueness. In the case that $\lambda_{c,2}(\beta)  < \lambda_{c,1}(\beta)$ either of the shaded regions above or below $\Omega_s$ in Figure \ref{fig:rect_schematic}b is a Cheeger set, as is the union. We may construct a minimizer of $\text{TV}$ over $\bvdo$ using the characteristic functions of either set, or any linear combination that satisfies the condition of zero flux. As commented earlier this non-uniqueness in $\bvdo$ stems from the geometric non-uniqueness. 

Interestingly, if one were to return to the original Bingham fluid problem and approach $Y \to Y_c^-$, the velocity solution is unique and can be shown to be symmetric, i.e.~the effect of viscosity here is to select a symmetric minimizer for $Y < Y_c$. 
\end{example}

\begin{figure}
\begin{center}
\includegraphics[width = 5cm]{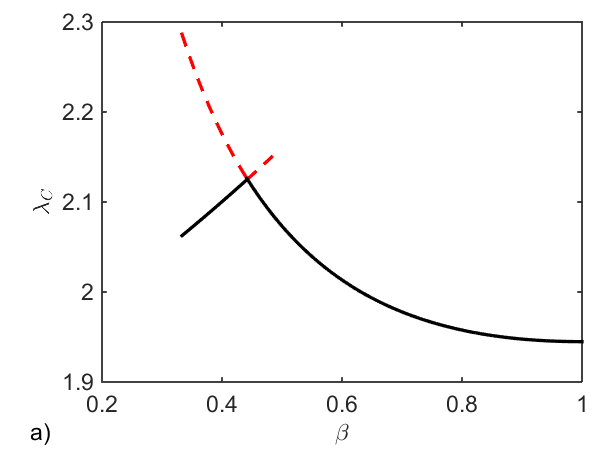}
\includegraphics[width = 5cm]{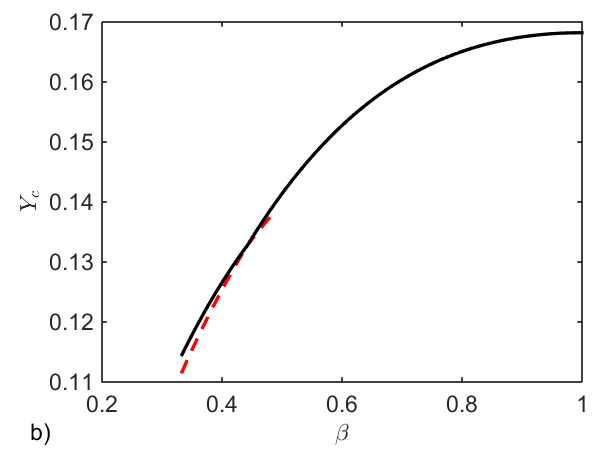}
\end{center}
\caption{Different mechanisms for the rectangle as $\beta$ is varied for $L=3$: a) $\lambda_c(\beta)$; b) $Y_c(\beta)$. The optimal values are in solid black and sub-optimal are in broken red.}
\label{fig:rectangle1}
\end{figure}

\begin{example}[Influence of the position of $\Omega_s$ with respect to the boundary] \label{ex:posbdy}
We revise example \ref{ex:SquareSquare} with $\Omega_s$ again being a square with
length $1$. This time  we move the inner square $\Omega_s$ in direction of $\partial \Omega$ and denote $d := d(\Omega_s,\partial \Omega)$.
The possible minimizers have
$\Omega_- = \open{r}{\Omega}\setminus \Omega_s$ or $\Omega_-=\open{r}{\Omega\setminus
\Omega_s}$ for some $r$, depending on $d$.
We illustrate this phenomenon in Figure \ref{fig:SquareInSquare}.
\end{example}

\begin{figure}
\begin{center}
\includegraphics[scale=0.85]{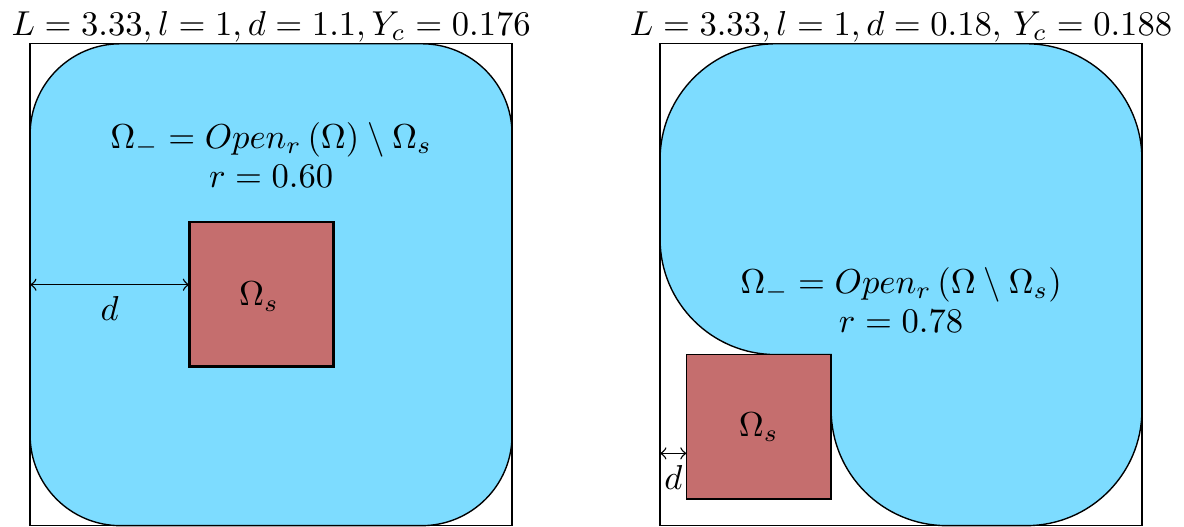}
\end{center}
\caption{In this case, area and perimeter of $\Omega,\Omega_s$ are constant.
We change the distance between $\partial \Omega$ and $\Omega_s$. The
critical yield number is larger if the inner set $\Omega_s$ is close to $\partial \Omega$.}
\label{fig:SquareInSquare}
\end{figure}

\subsection{Multiple particles}

We now consider multiple particles. In the first example, we retain the fixed $\abs{\Omega_s} = 1$ and consider the effects of increasing the number of particles. Intuitively, this increases the ratio of perimeter to area and hence we expect that $Y_c$ will reduce, as is indeed found to be the case. 

\begin{example}[A case with nontrivial $\Omega_1$]\label{ex:bridges}
We consider the two setups of Figure
\ref{fig:CompareDiamond2Squareb}, where for simplicity we keep $\Omega$ circular.
The flat regions correspond to the case where the optimal set  $\Omega_-$ is
equal to $\Omega \setminus \Omega_s$. 

We see that the orientation has an influence on the behavior of the minimizer as well as on the critical yield number. As $d$ is decreased below a critical value $\Omega_{1c}$ incorporates a \emph{bridge} between the two particles. The occurrence of the bridge clearly depends on orientation of the particles, and would also vary for different shaped particles. The phenomena of bridging between particles and of particles essentially acting independently beyond a critical distance have been studied computationally in the case of two spheres \cite{Liu2003,Merkak2006} (axisymmetric flows) and two cylinders \cite{Tokpavi2009} (planar two-dimensional flows). Aside from computed examples we know of no general theoretical results related to these phenomena, e.g. what the maximal distances for bridging are.

\begin{figure}
\begin{center}
\includegraphics[width = 0.325\textwidth]{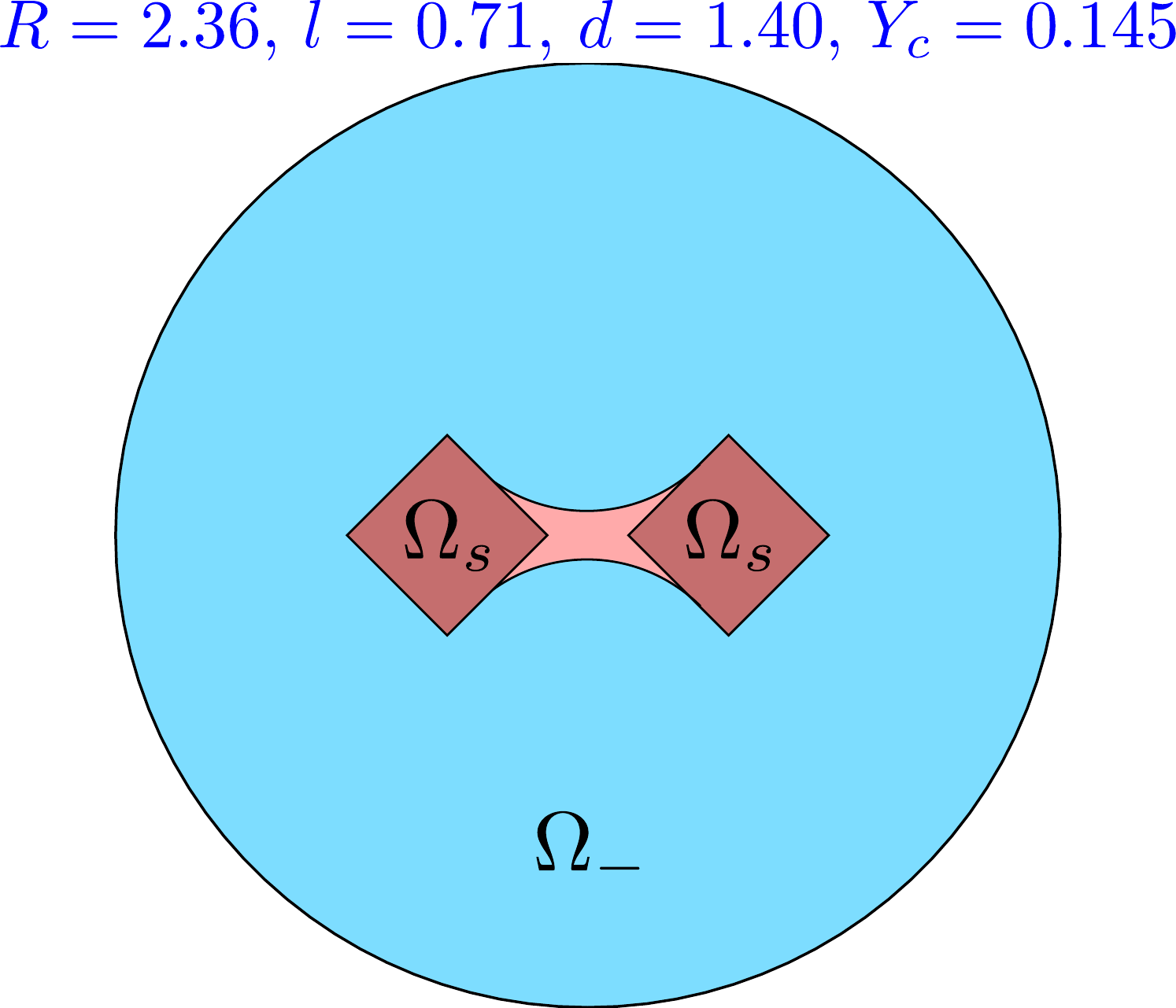}
\hfill
\includegraphics[width = 0.325\textwidth]{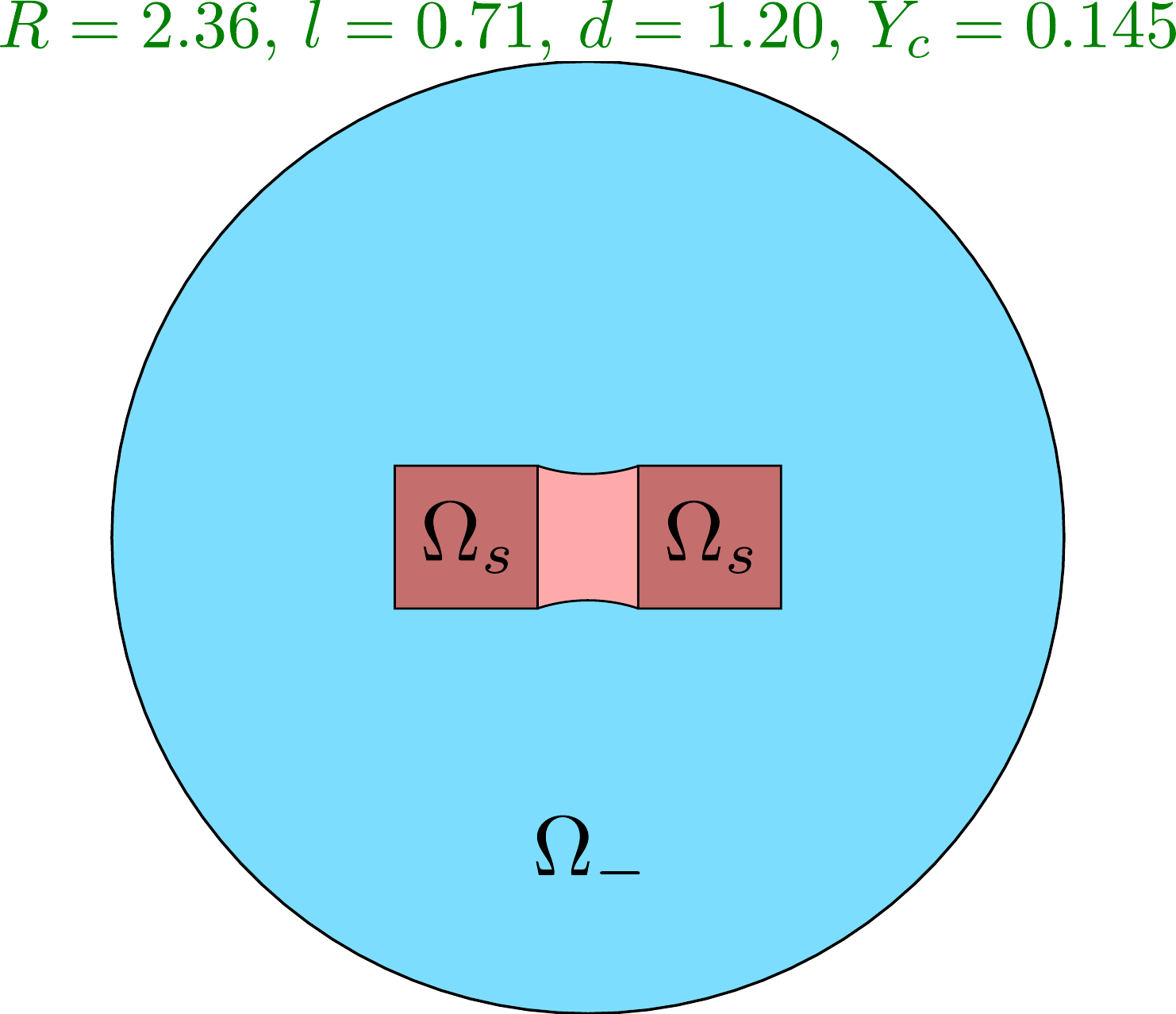}
\hfill
\includegraphics[width = 0.325\textwidth]{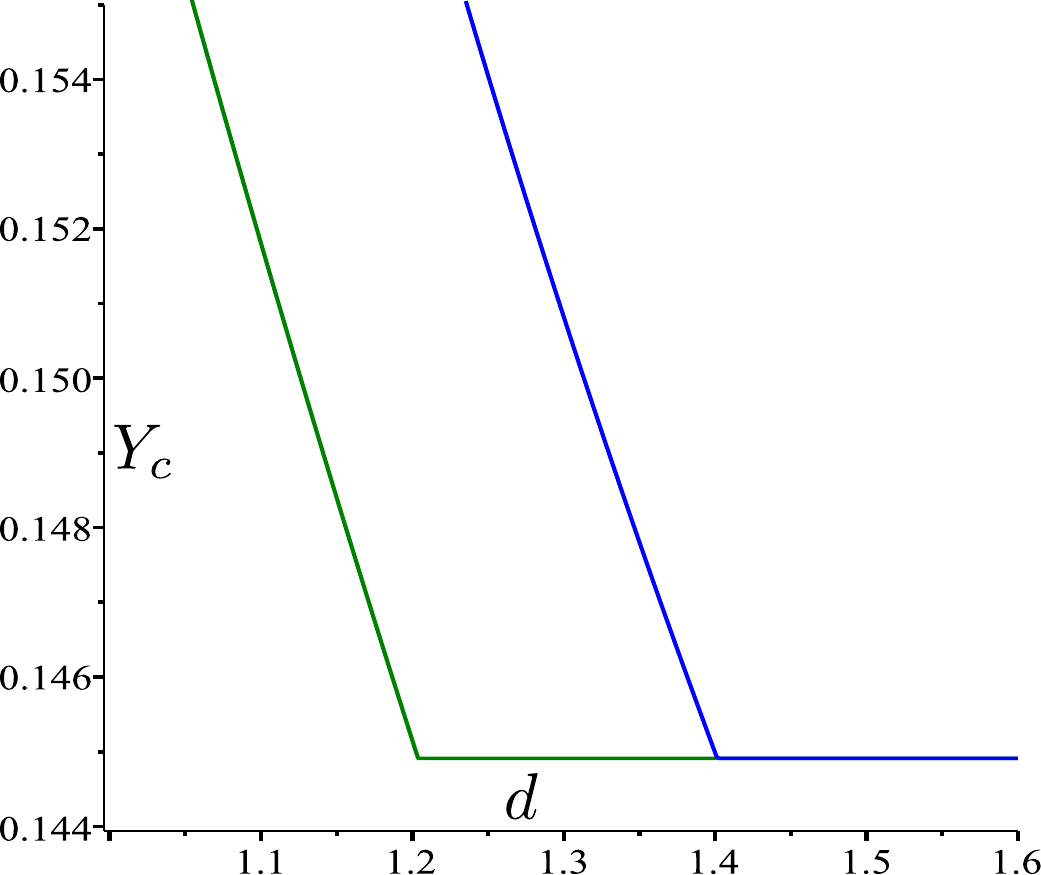}
\end{center}
\caption{Left and center: Two different arrangements of squares, at the corresponding transition points. Here, the trivial and nontrivial solutions coexist and the same critical yield number appears for both orientations of the square. Right: Critical yield numbers, with respect to the distance $d$ between the centers of the squares. The corners in the graph represent the transition between $\Omega_- = \open{r}{\Omega \setminus \Omega_s}$ and $\Omega_- = \Omega \setminus \Omega_s$.
}
\label{fig:CompareDiamond2Squareb}
\end{figure}

\end{example}

\begin{example}[Periodic arranged circles inside a square tube]\label{ex:tubes}
As a second example, we consider large arrays of particles, as illustrated in Figure \ref{fig:periodicCase}, i.e.~$\Omega$ is a square with length  $L$, and $\Omega_s$ is the union of $N^2$ small circles with radius $\delta$, the outermost of which are at distance $a$ from $\partial \Omega$. Here the intention is to illustrate particle size and separation effects and therefore we emphasize that in this case $|\Omega_s|$ is not constant for different $\delta$.

\begin{figure}
\begin{center}
\includegraphics[scale=0.8]{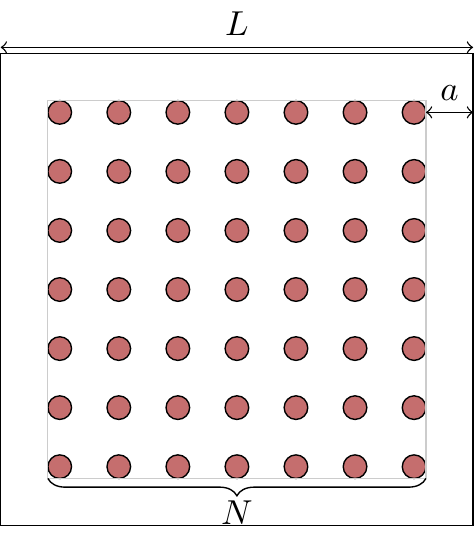}
\hspace{.05\textwidth}
\includegraphics[scale=0.25]{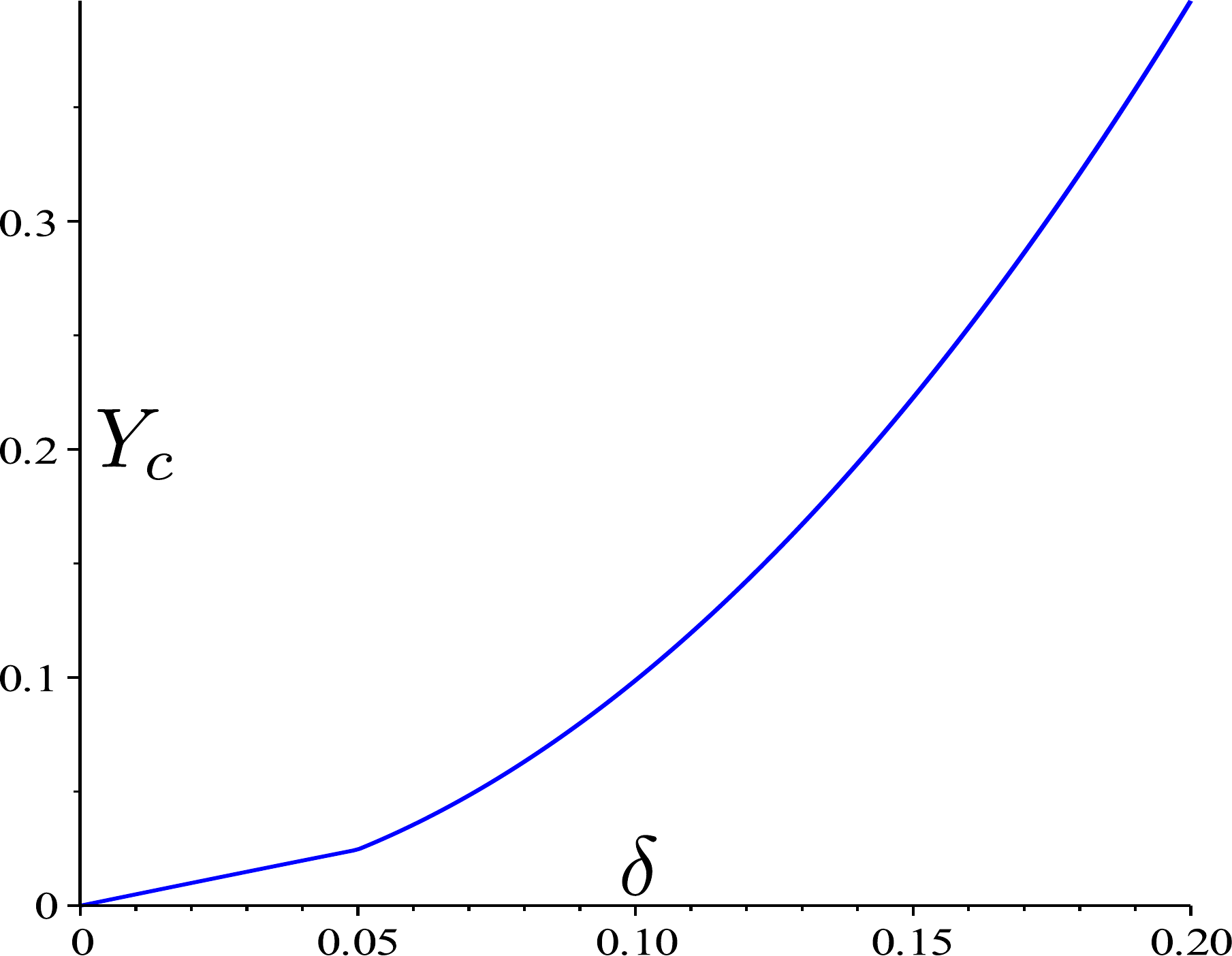}
\end{center}
\vspace{.01\textheight}
\begin{center}
\includegraphics[scale=0.85]{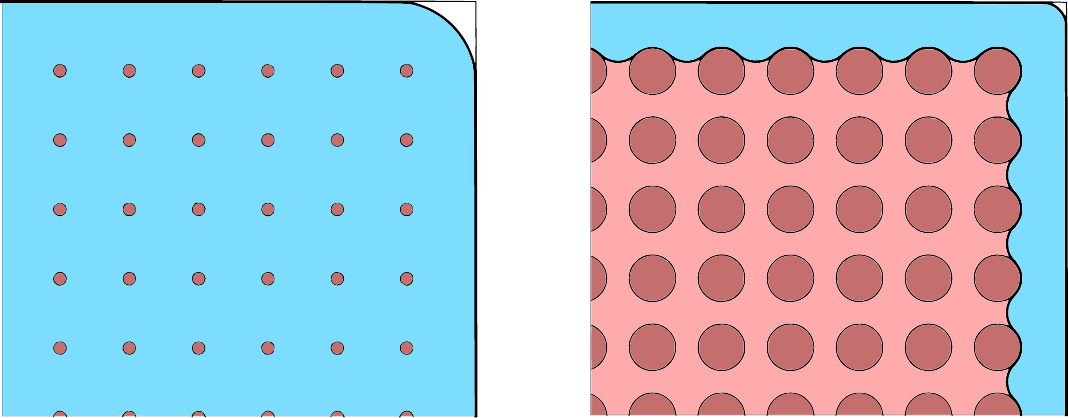}
\end{center}
 \caption{Upper row, left: Setup for the periodic case.
 Upper row, right: Dependence of the critical yield number on $\delta$, for $L=12$, $N=12$ and $a=0.4$. The corner in the graph corresponds to the transition from trivial to bridged optimal sets. Lower row: Optimal sets for $\delta=0.04$ and $\delta=0.2$, when $L=12$, $N=12$ and $a=0.4$.}
\label{fig:periodicCase}
\end{figure}

Two types of optimal sets appear: For $\delta$ small (left), we have $\Omega_1=\Omega_s$, $\Omega_-=\open{\lambda^{-1}}{\Omega} \setminus \Omega_s$. For bigger $\delta$ (right), one gets
$\Omega_1=\close{\lambda^{-1}}{\Omega_s}$, and $\Omega_-=\open{\lambda^{-1}}{\Omega\setminus \Omega_s}=\open{\lambda^{-1}}{\Omega}\setminus \Omega_1$ for $\lambda$ the corresponding Cheeger constant. One could think of a third configuration in which isolated components of $\Omega_-$ appear between the circles of $\Omega_s$, but it is easy to see that such a configuration has higher energy.
Figure \ref{fig:periodicCase} (top right) shows the variation in $Y_c$ with $\delta$ for a particular choice of parameters ($L=12$, $N=12$ and $a=0.4$). The observable \emph{kink} is where the transition between the two configurations occurs. 


Although this example is quite theoretical, this type of phenomenon occurs commonly in non-Newtonian suspension flows. In hydraulic fracturing, proppant suspensions are pumped along narrow fractures. For critical flow rates the individual dense proppant particles may act together in settling: so called \emph{convection}, see e.g.~\cite{Cleary1992}. This represents a serious risk for the process in that in \emph{convective} settling the group of particles settles faster than when individually settling, as in the latter case secondary flows are induced on a more local scale. It is interesting that these features (local and global) are captured by the simple model here, where the yield stress fluid definitively couples the particles via bridging. Convective settling is however not in general reliant on the yield stress.
\end{example}

These examples also expose an interesting question concerning individual particle behaviour. Dense suspensions in shear-thinning fluids often exhibit interesting settling patterns, e.g.~the column-like patterns in \cite{Daugan2004}. Such patterns are excluded in our study as we have assumed that the speed of $\Omega_s$ is uniform. There is a rich vein of interesting problems here to study. For example, if we remove the constraint of equal particle velocities, do particle arrays such as that considered above admit other optimal solutions that select patterns amongst the particles, e.g.~stripes moving at different speeds, or are slight perturbations from the regular lattice favourable?

\def\cprime{$'$}
\providecommand{\noopsort}[1]{}\def\ocirc#1{\ifmmode\setbox0=\hbox{$#1$}
\dimen0=\ht0
  \advance\dimen0 by1pt\rlap{\hbox to\wd0{\hss\raise\dimen0
  \hbox{\hskip.2em$\scriptscriptstyle\circ$}\hss}}#1\else {\accent"17 #1}\fi}

\section*{Acknowledgements}
This work has been supported by the Austrian Science Fund (FWF) within the national research network `Geometry+Simulation', project S11704.

\bibliographystyle{plain}
\bibliography{FriIglMerPoeSch16}
\end{document}